\newtheorem{theorem}{Theorem}
\newtheorem{lemma}{Lemma}
\theoremstyle{remark}
\newtheorem*{remark}{Remark}
\newtheorem{example}{Example}
\DeclareMathOperator{\KP}{\textit{K}\,}
\DeclareMathOperator{\ttt}{\mathbf{t}\mskip 0.5mu}
\DeclareMathOperator{\dd}{\mathbf{d}\mskip 0.5mu}
\newcommand{\cnd}{\mskip 1mu | \mskip 1mu}
\newcommand{\rd}[1]{{}}
\newcommand{\bl}[1]{{#1}}
\newcommand{\bbar}[1]{\overline{#1}\vphantom{#1}} 
\def\eps{\varepsilon}
\title{Conditional probabilities and\\ van Lambalgen theorem revisited\footnote{The paper records the discussion of conditional randomness results by the participants of the Heidelberg Focus Semester on algorithmic randomness, 2015 (Laurent Bienvenu, \bl{Mathieu Hoyrup, Rupert H\"olzl}, Wolfgang Merkle, Jason Rute and others)}}
\author{Bruno Bauwens\thanks{National Research University Higher School of Economics (HSE), Faculty of Computer Science, Kochnovskiy Proezd 3, Moscow, 125319 Russia, \protect\url{bbauwens@hse.ru}} \and Alexander Shen\thanks{Laboratoire d'Informatique, de Robotique et de Micro\'electronique de Montpellier, CNRS, Universit\'e de Montpellier, \protect\url{alexander.shen@lirmm.fr}. Supported by ANR RaCAF grant.} \and Hayato Takahashi\thanks{1-1 Yanagido, Gifu City 501-1193, Japan. Organization for Promotion of Higher Education and Student Support, Gifu University. \protect\url{hayato.takahashi@ieee.org}. Supported by JSPS KAKENHI grant number 24540153.}}
\date{}
\begin{document}

\maketitle

\begin{abstract}
The definition of conditional probability in case of continuous distributions  was an important step in the development of mathematical theory of probabilities. How can we define this notion in algorithmic probability theory? In this survey we discuss the developments in this direction trying to explain what are the difficulties and what can be done to avoid them. Almost all the results discussed in this paper have been published (and we provide the references), but we tried to put them into perspective and to explain the proofs in a more intuitive way. We assume that the reader is familiar with basic notions of measure theory and algorithmic randomness (see, e.g.,~\cite{usv} or~\cite{ShenSurvey} for a short introduction).
\end{abstract}

\section{Conditional probability}

Let $P$ be a computable measure on the product of the two copies of the Cantor space $\Omega_1\times\Omega_2$, and let $P_1$ be its marginal distribution (=projection of $P$ onto $\Omega_1$).  Consider some $\alpha_1\in\Omega_1$. We want to define the conditional distribution on $\Omega_2$ with the condition ``the first coordinate is equal to $\alpha_1$''. For that we consider a prefix $a_1$ of $\alpha_1$ and the conditional distribution on $\Omega_2$ with the condition ``the first coordinate starts with $a_1$''. (For this we need that $P_1(a_1)$, the probability of the interval $[a_1]$ of all extensions of $a_1$, to be positive.) In this way we get a family of measures $P_{a_1}$ on $\Omega_2$:
$$
P_{a_1}(a_2)=\frac{P(a_1,a_2)}{P_1(a_1)}
$$
Here the numerator is the $P$-measure of the product $[a_1]\times[a_2]$, and the denominator is the $P$-measure of $[a_1]\times \Omega_2$. Then, for given $a_2$, we consider the limit of probability $P_{a_1}(a_2)$ as the length of prefix $a_1$ (of $\alpha_1$) tends to infinity.

\begin{center}
\begin{tikzpicture}[scale=0.75]
      \begin{scope}[shift={(0.2,0)}]
	\foreach \x/\xx/\col in {2/3.6/gray!20,2.4/3.2/gray!35,2.6/3.0/gray!50}
	{
	  \filldraw[\col] (\x,0) rectangle (\xx,4);
	  }
	\draw[thick] (2.8,0) node[anchor=north] {$\alpha_1$} -- (2.8,4);
      \end{scope}
         \draw (0,0) -- node[anchor=east] {$\Omega_2$} (0,4) 
      -- (4,4) -- (4,0) -- node[anchor=north] {$\Omega_1$} (0,0) ;
\end{tikzpicture}
\end{center}

\begin{theorem}[\cite{TakahashiDefinition}]
If $\alpha_1$ is Martin-L\"of random with respect to $P_1$, then this limit is well defined and determines a measure on $\Omega_2$.
\end{theorem}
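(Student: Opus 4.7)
The plan is to interpret the conditional probabilities $P_{a_1}(a_2)$, for prefixes $a_1$ of $\alpha_1$ of growing length, as the values of a bounded computable $P_1$-martingale, and then apply an effective form of Doob's martingale convergence theorem. Fix a finite string $a_2$; for $\alpha_1\in\Omega_1$ write $a_1^{(n)}$ for its length-$n$ prefix, and set
\[
  M_n(\alpha_1)\;=\;\frac{P(a_1^{(n)},a_2)}{P_1(a_1^{(n)})}.
\]
This is well defined whenever $P_1(a_1^{(n)})>0$, which is automatic for every $P_1$-Martin-L\"of random $\alpha_1$, since otherwise $[a_1^{(n)}]$ would itself be a $P_1$-null effectively open set containing $\alpha_1$. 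A direct calculation using $P(a_1,a_2)=P(a_1 0,a_2)+P(a_1 1,a_2)$ and the corresponding identity for $P_1$ shows that $(M_n)_n$ is a martingale for the natural filtration under $P_1$; it is bounded by $1$, and $M_n(\alpha_1)$ is a computable function of $a_1^{(n)}$, uniformly in $a_2$.

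Next I would prove that $(M_n(\alpha_1))_n$ converges at every $P_1$-random $\alpha_1$. The classical Doob argument via upcrossings effectivizes cleanly: for rational $p<q$, let $U^{[p,q]}(\alpha_1)$ count the upcrossings of the open interval $(p,q)$ by $(M_n(\alpha_1))_n$. The predicate ``at least $k$ upcrossings have occurred by stage $N$'' is a finite Boolean combination of strict inequalities $M_n<p$, $M_m>q$ on computable reals, hence defines an effectively open subset of $\Omega_1$; taking the supremum in $N$, the function $U^{[p,q]}$ is lower semicomputable on $\Omega_1$, and Doob's upcrossing inequality bounds its $P_1$-integral by a finite constant depending only on $p,q$ (since $M_n$ is bounded by $1$). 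By Levin's integral test, every lower semicomputable $P_1$-integrable function is finite at every $P_1$-random point; hence $U^{[p,q]}(\alpha_1)<\infty$ there. Since the construction is uniform in the countable parameter set $(a_2,p,q)$, the corresponding null sets combine into a single Martin-L\"of $P_1$-null set, outside of which all upcrossing counts are finite for all $a_2$ simultaneously, and so the limits $\mu(a_2):=\lim_n M_n(\alpha_1)$ all exist.

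It remains to verify that $a_2\mapsto\mu(a_2)$ extends to a Borel probability measure on $\Omega_2$. The identities $P_{a_1^{(n)}}(\varepsilon)=1$ (with $\varepsilon$ the empty string) and $P_{a_1^{(n)}}(a_2\, 0)+P_{a_1^{(n)}}(a_2\, 1)=P_{a_1^{(n)}}(a_2)$ hold at every $n$ and pass to the limit, so $\mu$ satisfies the Kolmogorov consistency conditions on cylinders and extends uniquely to such a measure. I expect the main obstacle to be the effective Doob convergence step, that is, bridging from classical almost-sure convergence to convergence at every Martin-L\"of random point; this relies crucially on keeping the upcrossing counts lower semicomputable despite being defined from computable real values, and on having all the effective estimates uniform in the discrete parameters so that a single Martin-L\"of null set suffices.
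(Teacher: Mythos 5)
Your proof is correct and follows essentially the same route as the paper: interpret $a_1\mapsto P_{a_1}(a_2)$ as a bounded computable $P_1$-martingale and effectivize Doob convergence by showing the crossing counts are controlled at every Martin-L\"of random point. The only (cosmetic) difference is packaging: you invoke the upcrossing inequality plus Levin's integral test for lower semicomputable functions, while the paper directly bounds the measure of the ``many crossings'' sets via a buy-low--sell-high martingale, which is the same estimate in different clothing.
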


\begin{proof}
For a fixed $a_2$ the function $m\colon a_1\mapsto P_{a_1}(a_2)$ is a computable martingale on $\Omega_1$ with respect to $P_1$ (being a ratio of some measure and $P_1$; note that the denominator $P_1(a_1)$ is not zero since $\alpha_1$ is random with respect to $P_1$), so the limit exists due to effective martingale convergence theorem. Here is its proof. If $m$ is a computable non-negative martingale, then for every rational $c$ the set of sequences along which $m$ exceeds $c$ somewhere, is effectively open, and has measure at most $1/c$, if the initial capital is~$1$. So $m$ is bounded along every Martin-L\"of random sequence. Also, for every pair of positive rationals $u<v$, the set of sequences where the martingale is infinitely often less than $u$ and infinitely often greater than $v$, is an effectively null set. Indeed, the set of sequences where there are at least $N$ changes across $(u,v)$, is effectively open, and its measure is small due to ``buy low --- sell high'' argument. Formally, we consider another martingale that follows $m$'s bets, starting when $m$ becomes less than $u$, until $m$ becomes greater than $v$, and then waits for the next time when $m$ becomes less than $u$, starts following $m$ again, etc. So for Martin-L\"of sequences convergence is guaranteed.\footnote{Technically the martingale $m$ may be infinite if $P_1(a_1)=0$ for some strings $a_1$, but this does not matter for the argument: these intervals are covered by an effectively null set we construct.}

To see that we indeed get a measure on $\Omega_2$ in this way, we have to check the (finite) additivity which is obvious (limit of the sum is the sum of limits).
\end{proof}

This measure can be denoted by $P(\cdot \cnd\alpha_1)$. Of course, if we start with a product measure $P=P_1\times P_2$, the conditional probability is the same for all $\alpha_1$, and equals $P_2$.

\begin{remark}
The definition of a conditional measure in the classical probability theory is usually given using Radon--Nikodym theorem; then the Lebesgue differentiation theorem can be used to show that the conditional measure defined in this way coincides almost everywhere with the limit we considered.
\end{remark}

\section{Non-computable conditional probability}

Let us note first that the limit in the definition of conditional probably may not exist for some conditions (though these conditions form a null set, as we have seen). This is shown by the following

\begin{example}
Consider the following distribution on $\Omega_1\times\Omega_2$. We identify this product with the square $[0,1]\times[0,1]$; the binary-rational points have two representations as sequences, but this does not matter much. In our distribution the grey areas have double density compared with the uniform distribution on the square, while the white areas have zero density.
\begin{center}
 \begin{tikzpicture}[scale=0.75]
    
      \foreach \x/\y in {2/2,1/0,0.5/2,0.25/0,0.125/2,0.062/0,0.03/2}
        \fill[gray!50] (\x,\y) rectangle +(\x,2);

      \draw (0,0) rectangle (4,4);
      \node[anchor=north] {$\uparrow$};
      \node[anchor=north] at (2,0) {$\frac{1}{2}$};
      \node[anchor=north] at (1,0) {$\frac{1}{4}$};
    
\end{tikzpicture}
\end{center}
Then for the leftmost point (shown by an arrow) the limit distribution does not exist. Indeed, the conditional probabilities of two halves oscillate between $1/3$ and $2/3$ depending on the length of $a_1$, as one may easily check.
\end{example}

\bl{Our next example shows that the conditional probability for a computable distribution on pairs might exist but be non-computable.  A first example of this type was constructed in~\cite{afr}.\footnote{In fact, the example in~\cite{afr} has an additional property: the set of $\alpha_1$ for which $P(\cdot\cnd \alpha_1)$ is not computable with oracle $\alpha_1$, has positive $P_1$-measure. The measure constructed in Example~\ref{ex:bauwens} (see below) also has this property, see~\cite[Corollary 2]{counterExample}. On the other hand, the example in~\cite{afr} has a conditional measure $P(\cdot\cnd\alpha_1)$ that is continuous in $\alpha_1$, unlike Example~\ref{ex:bauwens}.
}}
\rd{Edited the footnote, please check!}

\begin{example}
Let $a_1,a_2,\ldots$ be an increasing computable sequence of rational numbers whose limit $\alpha$ is non-computable. Consider the following distribution:

\smallskip

\begin{center}
 \begin{tikzpicture}[scale=0.75]
        \fill[gray!40] (0,0) rectangle (4,4);
	\foreach \x/\y/\yy in {2/0/1.2,1/1.2/1.8,0.5/1.8/2.1,0.25/2.1/2.3,0.125/2.3/2.5,0.063/2.5/2.6}
	{
	  \fill[darkgray] (\x,\y) rectangle ($(\x,0) + (\x,\yy)$);
	  \fill[white] (\x,\yy) rectangle (0,0);
	  }
	\foreach \x/\y/\yy in {2/0/1.2,1/1.2/1.8,0.5/1.8/2.1,0.25/2.1/2.3,0.125/2.3/2.5,0.063/2.5/2.6}
	{
	  \draw[gray] (0,\y) -- (\x,\y);
	  }
	  \node[anchor=east] at (0,1.2) {$a_1$};
	  \node[anchor=east] at (0,1.8) {$a_2$};

	  \draw[] (0,2.7) node[anchor=east] {$\alpha$} -- +(4,0);

	\draw (0,0) rectangle (4,4);
\end{tikzpicture}
\end{center}
\medskip
Vertical lines are drawn at points $1/2$, $1/4$,\ldots; in the grey zone the density is the same as for the uniform distribution; in the black zone the density is twice bigger, and in the white zone the density is zero. Since the widths of the black and white stripes on every horizontal line are the same,  the total amount of mass does not change; we just move all the mass horizontally from the white part to the black part.

Note that the distribution on the square is computable even though $\alpha$ is not computable. Indeed, if we are interested in the mass of some rectangle $R$ (the product of two binary intervals), the mass transfers in the small rectangles (thinner than $R$) do not matter, and we may look only on finitely many $a_i$ (they can be computed).

It is easy to see that the limit distribution (at the leftmost point) is the uniform distrubition on $[\alpha,1]$, and it is not computable, since the density $1/(1-\alpha)$ is not computable.
\end{example}

In this example the conditional distribution is non-computable only at one point. However, the example can be easily changed so that the conditional distribution is the same non-computable distribution for all $\alpha_1$ except for binary-rational numbers (sequences with finitely many zeros or ones).
\begin{example}\label{ex:bauwens}
Consider again the increasing computable sequence $a_1,a_2,\ldots$ of rational numbers that converges to a non-computable real $\alpha$.
\medskip
\begin{center}
     \begin{tikzpicture}
    \def\a{1.5};
    \def\aa{1.9};
    \def\aaa{2.3};
    \def\afour{2.55};
    \def\alim{2.7};
    \draw[black] (0,\a)   node[anchor=east]  {\small{$a_1$}}  -- +(4,0);
    \draw (0,\aa)  node[anchor=east] {\small{$a_2$}}  -- +(4,0);
    \draw (0,\aaa) node[anchor=east] {\small{$\vdots$}}  -- +(4,0);
    \draw (0,\afour)   -- +(4,0);
    \draw       (0,\alim)  node[anchor=east] {$\alpha$}    -- +(4,0);
    \draw[line width=2.5pt] (0.00,0) -- +(0,\a);
    \foreach \x  in {0.00,2.00}
      {\draw[line width=2pt] (\x,\a) -- (\x,\aa);}
    \foreach \x  in {0.0,1,...,3} 
      {\draw[line width=1.5pt] (\x,\aa) -- (\x,\aaa);}
    \foreach \x  in {0.00,0.50,...,3.50} {
      \draw[line width=1pt] (\x,\aaa) -- (\x,\afour);
      }
    \foreach \x  in {0,0.25,...,3.75} 
      {\draw[gray] (\x,\afour) -- (\x,\alim);}
    \filldraw[fill=gray!20] (0,\alim) rectangle (4,4);
    \draw rectangle (4,4);
  \end{tikzpicture}
\end{center}
\end{example}
\medskip
In the grey area above the horizontal line with coordinate $\alpha$ we still keep the density the same as in the uniform distribution; however, below $\alpha$ all the mass is concentrated on black vertical segments. For example, the mass $a_1$ is concentrated on the segment $\{0\}\times [0,a_1]$, and is distributed uniformly there (so the mass transfer happens only in the horizontal direction). The mass $a_2-a_1$ is then split evenly between two vertical segments shown (at horizontal coordinates $0$ and $1/2$), etc. One can say that each vertical segment ``horizontally grabs'' all the mass of the white rectangle on the right of it (so the latter has zero density except for its left side).

As before, it is easy to see that the resulting distribution is computable: to find the mass of a binary rectangle of width $2^{-n}$, it is enough to take into account only $a_1,\ldots,a_n$ and use the uniform distribution above $a_n$.

It is also easy to see that for every $\alpha_1$ that is not binary rational (in other words, for vertical lines that do not cross black segments), the conditional probability $P(\cdot\cnd \alpha_1)$ is uniformly distributed on $[\alpha,1]$.

We will reuse this example in Section~\ref{sec:bauwens-example}. 

\section{van Lambalgen theorem}

Now we consider the relation between the randomness of the pair and its components. The basic result in this field goes back to Michiel val Lambalgen (see~\cite[Theorem 5.10]{mvl}, where this result is stated in an implicit way). It considers the case of the product $P$ of two computable measures $P_1$ on $\Omega_1$ and $P_2$ on $\Omega_2$, and says that the pair $(\alpha_1,\alpha_2)$ is Martin-L\"of random with respect to $P$ if and only if two conditions are satisfied: 
\begin{itemize}
\item $\alpha_1$ is random with respect to the measure $P_1$; 
\item $\alpha_2$ is random with respect to the measure $P_2$ with oracle~$\alpha_1$. 
\end{itemize}
(See, for example, \cite[chapter 5]{usv} for the proof.) Note that these conditions are not symmetric; of course, one can exchange the coordinates and conclude that $\alpha_1$ is also random with respect to $P_1$ with oracle~$\alpha_2$.

It is natural to look for some version of van Lambalgen theorem generalized to non-product measures $P$. Informally speaking, such a version should say that $(\alpha_1,\alpha_2)$ is $P$-random if and only if 
\begin{itemize}
\item $\alpha_1$ is $P_1$-random (where $P_1$ is the projection of $P$);
\item $\alpha_2$ is random with respect to the conditional probability measure $P(\cdot \cnd\alpha_1)$ with oracle~$\alpha_1$. 
\end{itemize}
But some precautions are needed. The problem is that Martin-L\"of randomness is  usually defined for computable measures, while the conditional measure is defined as a limit. As we have seen, it may not be computable even with oracle~$\alpha_1$ for $P_1$-random $\alpha_1$. Indeed, in our example the conditional distribution was $[\alpha,1]$ for every irrational condition $\alpha_1$, and among them there are $P_1$-random conditions that do not compute~$\alpha$. To prove this, let us note that uniformly random reals are all $P_1$-random and some of them do not compute~$\alpha_1$.  Indeed, if the uniform measure of conditions $\alpha_1$ that compute $\alpha$ were positive, then the same would be true for some fixed oracle machine due to countable additivity of the uniform measure. Then Lebesgue density theorem says that there is some interval where most of the oracles compute $\alpha$, so $\alpha$ can be computed without oracle by majority voting --- but $\alpha$ is not computable. (The last argument is known as de Leeuw -- Moore -- Shannon -- Shapiro theorem~\cite{dlmss}.)

Still we can make several observations.

\section{Image randomness and beyond}

\emph{If $(\alpha_1,\alpha_2)$ is \textup(Martin-L\"of\textup) random with respect to $P$, then $\alpha_1$ is \textup(Martin-L\"of\textup) random with respect to the marginal distribution~$P_1$}. This is obvious; every cover of $\alpha_1$ with small $P_1$-measure gives a cover of $(\alpha_1,\alpha_2)$ with the same $P$-measure, being multiplied by $\Omega_2$. 

This result can be considered also as a special case of the image randomness theorem (see the section about image randomness in~\cite{usv}) applied to the projection mapping.  Moreover, the reverse direction of image randomness theorem (``no randomness from nothing'') guarantees that \emph{every $P_1$-random $\alpha_1$ is a first component of some $P$-random pair $(\alpha_1,\alpha_2)$}.

So we know that for every $P_1$-random $\alpha_1$ there exists at least one $\alpha_2$ that makes the pair $(\alpha_1,\alpha_2)$ $P$-random. It is natural to expect that there are much more. Indeed this is the case, as the following result from~\cite{TakahashiDefinition} shows.

\begin{theorem}
Let $\alpha_1$ be $P_1$-random. Then the set of $\alpha_2$ such that $(\alpha_1,\alpha_2)$ is $P$-random, has probability $1$ according to the conditional probability distribution $P(\cdot\cnd \alpha_1)$.
\end{theorem}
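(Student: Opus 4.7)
The plan is to show that for every $P$-Martin-L\"of test $\{U_n\}$ (effectively open sets with $P(U_n) \le 2^{-n}$) and every $P_1$-random $\alpha_1$, the conditional measures of the sections
\[
f_n(\alpha_1) \;:=\; P(U_n^{\alpha_1} \cnd \alpha_1), \qquad U_n^{\alpha_1} \;:=\; \{\alpha_2 : (\alpha_1,\alpha_2) \in U_n\},
\]
tend to $0$ as $n\to\infty$. Applied to a universal $P$-ML-test, this gives $P(\bigcap_n U_n^{\alpha_1}\cnd \alpha_1) \le \inf_n f_n(\alpha_1) = 0$, and $\bigcap_n U_n^{\alpha_1}$ contains every $\alpha_2$ for which $(\alpha_1,\alpha_2)$ fails to be $P$-random, which is the desired conclusion.

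For fixed $n$, define $m_n(a_1) := P(U_n \cap ([a_1]\times\Omega_2))/P_1(a_1)$ (well-defined along any $P_1$-random $\alpha_1$, as in the footnote in the proof of Theorem~1). A direct check shows that $m_n$ is a nonnegative martingale on $\Omega_1$ with respect to $P_1$; it is lower semicomputable in $a_1$, since the numerator is the $P$-measure of an effectively open set and the denominator is computable, and $m_n(\varepsilon) = P(U_n) \le 2^{-n}$. By the maximal inequality already used in the proof of Theorem~1, the effectively open set $V_n := \bigcup\{[a_1] : m_n(a_1) > 2^{-n/2}\}$ satisfies $P_1(V_n) \le 2^{-n/2}$, uniformly in $n$, so $\{V_n\}$ is a $P_1$-Martin-L\"of test. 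Consequently every $P_1$-random $\alpha_1$ lies outside $V_n$ for all sufficiently large $n$: there is an $n_0$ with $m_n(a_1) \le 2^{-n/2}$ for all $n \ge n_0$ and every prefix $a_1$ of $\alpha_1$.

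The main obstacle is transferring this uniform bound on $m_n$ along prefixes into a bound on $f_n(\alpha_1)$, because the section $U_n^{\alpha_1}$ itself depends on $\alpha_1$. Write $U_n = \bigsqcup_i [a_1^{(i)}] \times [a_2^{(i)}]$ as an effective disjoint decomposition into basic rectangles. Whenever $a_1^{(i)}$ is a prefix of $a_1$ we have $[a_1^{(i)}] \cap [a_1] = [a_1]$, so the $i$th summand contributes $P([a_1]\times[a_2^{(i)}])/P_1(a_1) = P_{a_1}([a_2^{(i)}])$; dropping the other summands (all nonnegative) gives
\[
m_n(a_1) \;\ge\; \sum_{i \,:\, a_1^{(i)} \text{ is a prefix of } a_1} P_{a_1}([a_2^{(i)}]).
\]
As $|a_1|\to\infty$ along $\alpha_1$, the index set increases to $\{i : \alpha_1 \in [a_1^{(i)}]\}$, and for each such $i$ the term $P_{a_1}([a_2^{(i)}])$ converges to $P([a_2^{(i)}] \cnd \alpha_1)$ by Theorem~1. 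Fatou's lemma for counting measure therefore yields
\[
f_n(\alpha_1) \;=\; \sum_{i \,:\, \alpha_1 \in [a_1^{(i)}]} P([a_2^{(i)}] \cnd \alpha_1) \;\le\; \liminf_{|a_1|\to\infty} m_n(a_1) \;\le\; 2^{-n/2},
\]
which goes to $0$ as $n\to\infty$, completing the proof.
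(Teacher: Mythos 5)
Your proposal is correct and follows the same overall strategy as the paper's proof: your $m_n$ is exactly the martingale ``average vertical size of $U_n$ in a stripe'' used there, your $V_n$ is the union of heavy intervals from Lemma~\ref{lem:heavy-intervals} (with thresholds $2^{-n}$ and $2^{-n/2}$ in place of $2^{-2n}$ and $2^{-n}$), and the Solovay-criterion step for random $\alpha_1$ is identical. The one genuine difference is how you pass from the bound on $m_n$ along prefixes of $\alpha_1$ to a bound on the conditional measure of the $\alpha_1$-section, i.e., the paper's Lemma~\ref{lem:good-alpha}: the paper argues by contradiction, extracting a finite family of vertical intervals inside the open section and using compactness to force some small stripe around $\alpha_1$ to be heavy, whereas you decompose $U_n$ into disjoint basic rectangles and apply Fatou's lemma for sums to get $f_n(\alpha_1)\le\liminf m_n(a_1)$ directly. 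Both are sound. Your route avoids the compactness step and yields the inequality in one line, but it silently uses countable additivity of $P(\cdot\cnd\alpha_1)$ to identify $f_n(\alpha_1)$ with the sum $\sum_i P([a_2^{(i)}]\cnd\alpha_1)$ over the disjoint sections; this is justified (Theorem~1 asserts the limit determines a measure, and finite additivity on clopen sets extends to a Borel measure by compactness of the Cantor space), but it deserves an explicit sentence. The paper's argument, by contrast, only ever evaluates the conditional measure on finitely many intervals at a time and so needs nothing beyond the definition of the limit.
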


\begin{proof}
Consider a universal Martin-L\"of test on the product space. Let $U_n$ be the effectively open set of measure at most $2^{-2n}$ provided by this test. In the case of product measure $P_1\times P_2$ we would consider the set $V_n$ of all $\alpha_1$ such that the $\alpha_1$-section of $U_n$ has $P_2$-measure greater than $2^{-n}$, note that $V_n$ is an effectively open set of measure at most $2^{-n}$, and conclude that random $\alpha_1$ do not belong to $V_n$ for all sufficiently large~$n$. (If a point $x$ is covered by infinitely many sets $V_n$, it is covered by all sets $\overline V\!_n=\bigcup_{k>n} V_k$, so $x$ is not random. This is often called the \emph{Solovay randomness criterion}.)  So for every random $\alpha_1$ and for sufficiently large values of $n$ the $\alpha_1$-section of $U_n$ has measure at most $2^{-n}$, so almost every $\alpha_2$ lies outside the $\alpha_1$-section of $U_n$ for all large $n$, which gives the desired result. Moreover, every $\alpha_2$ that is $P_2$-random with oracle $\alpha_1$ works, since the open cover for $\alpha_2$ provided by the $\alpha_1$-section of $U_n$ is $\alpha_1$-enumerable. (This is how the van Lambalgen theorem is proven.)

For the general case of non-product measure $P$ we should be more careful since the conditional probability is defined only in the limit. Instead of $V_n$, we consider all $n$-\emph{heavy} intervals $I$ in $\Omega_1$, i.e., all intervals $I$ such that $U_n$ occupies more than $2^{-n}$-fraction in $I\times \Omega_2$ measured according to $P$, in other words, all intervals $I$ such that $P(U_n\cap (I\times\Omega_2))> 2^{-n}P_1(I)$. This is an enumerable family of intervals since $P$ and $P_1$ are computable.

\begin{lemma}\label{lem:heavy-intervals}
The $P_1$-measure of the union of all $n$-heavy intervals is at most $2^{-n}$.
\end{lemma}

\begin{proof}
To prove that the union of $n$-heavy intervals has measure at most $2^{-n}$, it is enough to prove this for every finite  union of $n$-heavy intervals.  Without loss of generality we may assume that intervals in this union are disjoint (consider only maximal intervals). For every $n$-heavy interval $I$ the fraction of $U_n$ in the stripe $I\times \Omega$ exceeds $2^{-n}$, so the total $P_1$-measure of disjoint $n$-heavy intervals cannot exceed $2^{-n}$, otherwise $U_n$ would be too big (its measure would be greater than $2^{-2n}$).
\end{proof}

In other words, the function $$I\mapsto \text{fraction of $U_n$ in $I\times\Omega_2$}$$ is a (lower semicomputable) martingale with initial value $2^{-2n}$. So due to the martingale inequality the union of intervals where the martingale exceeds $2^{-n}$ is at most $2^{-n}$.

\medskip

\begin{lemma}\label{lem:good-alpha}
If $\alpha_1$ is outside any $n$-heavy interval, then the $\alpha_1$-section of $U_n$ has measure at most $2^{-n}$ according to the conditional probability with condition $\alpha_1$.
\end{lemma} 

\begin{proof}
If the conditional measure of the $\alpha_1$-section of $U_n$ exceeds $2^{-n}$, then there exists a finite set of disjoint vertical intervals $J_1,\ldots,J_k$ that have total conditional measure more than $2^{-n}$ and all belong to the $\alpha_1$-section of $U_n$. Since $U_n$ is open, the compactness argument shows that for sufficiently small intervals $I$ containing $\alpha_1$ we have
$$
I\times J_1,\ldots,I\times J_k \subset U_n.
$$
By assumption, the conditional measure of $J_1\cup\ldots\cup J_n$ exceeds $2^{-n}$, and the conditional probability is defined as the limit of conditional probabilities with condition $I$ when intervals $I$ containing $\alpha_1$ decrease. So for all sufficiently small $I$ the conditional measure of $J_1\cup\ldots\cup J_k$ with condition $I$ exceeds $2^{-n}$, but this means that $I$ is $n$-heavy and $\alpha_1$ is covered by $I$, contrary to our assumption. 
\end{proof}

Now consider (for some fixed random $\alpha_1$) all  $\alpha_2$ such that $(\alpha_1,\alpha_2)$ is non-random. Being non-random, these pairs belongs to all $U_n$, so for such a pair $\alpha_2$ is inside $\alpha_1$-sections of $U_n$ for all~$n$.  Since $\alpha_1$ is random, it is not covered by $n$-heavy  intervals $I$ for all sufficiently large~$n$ and by Lemma~\ref{lem:good-alpha} all bad $\alpha_2$ are covered by a set of conditional measure $2^{-n}$ at most $n$ (for all large~$n$). So the $\alpha_1$-conditional measure of the set of bad $\alpha_2$ is equal to~$0$.
\end{proof}
 
In fact, we have proven the following result from~\cite{TakahashiProduct,TakahashiGeneral} (one direction of van Lambalgen theorem).
\begin{theorem}\label{th:easy-direction}
If $\alpha_1$ is $P_1$-random and $\alpha_2$ is blind \textup(Hippocratic\textup) random with respect to the conditional probability $P(\cdot\cnd\alpha_1)$, then the pair $(\alpha_1,\alpha_2)$ is $P$-random.
\end{theorem}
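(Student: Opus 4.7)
The plan is to upgrade the argument of the preceding theorem from a ``null-set'' conclusion to one exhibiting an explicit blind Martin-L\"of test that captures $\alpha_2$. Recall that a blind (Hippocratic) test for $P(\cdot\cnd\alpha_1)$ with oracle $\alpha_1$ is a uniformly $\alpha_1$-enumerable sequence of effectively open sets $(V_n)\subseteq\Omega_2$ satisfying $P(V_n\cnd\alpha_1)\le 2^{-n}$ for all $n$; the enumeration must not invoke the conditional measure itself (which may be noncomputable) but may freely use $\alpha_1$ and the computable joint measure $P$. So the task is to manufacture such a sequence whose intersection contains $\alpha_2$.

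Proceeding by contraposition, suppose $(\alpha_1,\alpha_2)$ is not $P$-random. Then it lies in every level $U_n$ of the universal Martin-L\"of test on the product, so $\alpha_2$ belongs to the $\alpha_1$-section of every $U_n$. Those sections form, uniformly in $n$, an $\alpha_1$-enumerable sequence of effectively open subsets of $\Omega_2$, because $U_n$ is effectively open and slicing at $\alpha_1$ requires only oracle access to $\alpha_1$. The candidate sequence is therefore at hand; only the measure bound remains to be arranged.

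Here Lemma~\ref{lem:good-alpha} combined with the $P_1$-randomness of $\alpha_1$ does the work. By Lemma~\ref{lem:heavy-intervals} the unions $H_n$ of all $n$-heavy intervals form an effective Martin-L\"of test on $\Omega_1$, so $\alpha_1$ avoids $H_n$ for every $n$ beyond some threshold $n_0$ depending on $\alpha_1$. Lemma~\ref{lem:good-alpha} then gives $P(\alpha_1\text{-section of }U_n\cnd\alpha_1)\le 2^{-n}$ for every $n\ge n_0$. Hence, taking the constant shift $c:=n_0$, the uniformly $\alpha_1$-enumerable sequence
$$
V_n \;:=\; \alpha_1\text{-section of }U_{n+c}
$$
satisfies $P(V_n\cnd\alpha_1)\le 2^{-n-c}\le 2^{-n}$ for all $n\ge 1$ and is a genuine blind test; since $(\alpha_1,\alpha_2)\in U_{n+c}$ for every $n$, we have $\alpha_2\in V_n$ for every $n$, contradicting the blind randomness of $\alpha_2$ with respect to $P(\cdot\cnd\alpha_1)$.

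The only conceptually delicate point — and essentially the sole obstacle — is that the shift $c$ is picked using information about $\alpha_1$ (namely the threshold $n_0$) that the test itself cannot compute. This is nonetheless permissible: blind randomness demands only the \emph{existence} of some uniformly $\alpha_1$-enumerable sequence whose conditional measures are bounded by $2^{-n}$ and which captures the point, not that the construction come with an effective proof of its measure bound. Choosing the right constant $c$ thus yields a legitimate blind test, and the argument closes.
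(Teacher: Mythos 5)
Your proof is correct and follows essentially the same route as the paper, which simply observes that the cover of bad $\alpha_2$ built in the preceding theorem (via Lemmas~\ref{lem:heavy-intervals} and~\ref{lem:good-alpha} applied to the $\alpha_1$-sections of a universal product test) is enumerable with oracle $\alpha_1$. Your added details --- the Solovay-style threshold $n_0$, the constant shift $c$, and the remark that this non-uniform choice is permissible since blind randomness only requires the existence of a failing test --- are accurate elaborations of that same argument.
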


By \emph{blind} (\emph{Hippocratic}) randomness we mean a version of Martin-L\"of definition of randomness with respect to noncomputable measure. In this version (studied by Kjos-Hansen~\cite{kh}) uniformly effectively open tests are considered and the random sequence is required to pass all of them (if the measure is non-computable, there may be no universal test). It is opposed to \emph{uniform} randomness where the test is effectively open with respect to the measure (see~\cite{survey} for the details).

\begin{proof}[Proof of Theorem~\textup{\ref{th:easy-direction}}]
Indeed, in the construction above we get a cover for bad $\alpha_2$ that is enumerable with oracle $\alpha_1$.
\end{proof}

\section{A counterexample}\label{sec:bauwens-example}

The following counterexample from~\cite{counterExample} shows that the statement of Theorem~\ref{th:easy-direction} cannot be reversed.

\begin{theorem}
There exists a computable measure $P$ on $\Omega_1\times\Omega_2$, for which conditional measure $P(\cdot \cnd\alpha_1)$ is defined for all $\alpha_1$, 
and a $P$-random pair $(\alpha_1,\alpha_2)$ such
that $\alpha_2$ is not blind random with oracle $\alpha_1$ with respect to conditional distribution
$P(\cdot\cnd\alpha_1)$ on $\Omega_2$.  
\end{theorem}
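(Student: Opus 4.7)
The plan is to reuse the computable measure $P$ from Example~\ref{ex:bauwens}. First arrange the approximation $a_n\nearrow\alpha$ so that $\alpha<1/2$ and $\alpha-a_n<2^{-n-1}$ for all $n$, and choose $\alpha$ itself to be Martin-L\"of random (for instance a suitably shifted Chaitin's $\Omega$, thinned so the convergence-rate condition holds). Pick $\alpha_1\in\Omega_1$ uniformly Martin-L\"of random relative to the oracle $\alpha$; such $\alpha_1$'s have full Lebesgue measure, and in particular they are non-binary-rational and do not compute $\alpha$, so the conditional distribution $P(\cdot\cnd\alpha_1)$ equals the uniform measure on $[\alpha,1]$. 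Finally set $\alpha_2=\alpha$.

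The blind-test failure is straightforward. The sequence
\[
V_n \;=\; \bigcup_k \Bigl[0,\,a_k+2^{-n-1}\Bigr) \;=\; \bigl[0,\,\alpha+2^{-n-1}\bigr)
\]
is uniformly effectively open (no oracle required), and its $P(\cdot\cnd\alpha_1)$-measure is $2^{-n-1}/(1-\alpha)<2^{-n}$ because $1-\alpha>1/2$, so $(V_n)$ is a legitimate $\alpha_1$-blind Martin-L\"of test. Clearly $\alpha_2=\alpha\in V_n$ for every $n$, hence $\alpha_2\in\bigcap_n V_n$. Note that this $\alpha_2$ is essentially forced: the alternative $\alpha_2<\alpha$ places the pair in the ``white zone'' $\{y<\alpha\}\setminus(\text{black segments})$, which is itself an effectively open set of $P$-measure zero (one can check it is open by a neighbourhood argument, and it carries no $P$-mass because all sub-$\alpha$ mass lives on the black segments), and hence no such pair can be $P$-random.

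The hard step is to show that $(\alpha_1,\alpha)$ is $P$-random. The strategy is to leverage the Martin-L\"of randomness of $\alpha_1$ relative to $\alpha$ to rule out any effective $P$-null cover of the pair. Suppose $(\alpha_1,\alpha)\in\bigcap_n U_n$ for such a cover. Openness produces, for each $n$, a rational rectangle $I_n\times(p_n,q_n)\subseteq U_n$ with $\alpha_1\in I_n$ and $p_n<\alpha<q_n$. Split into cases. If the widths $q_n-\alpha$ remain bounded below along a subsequence, say $q_n-\alpha\ge 2^{-n/2}$, then a Markov/Solovay argument on the grey region $\Omega_1\times(\alpha,1)$, where $P$ coincides with two-dimensional Lebesgue measure, yields $\lambda(U_n\cap(\Omega_1\times(\alpha,1)))\le 2^{-n}$ and places $\alpha_1$ in a Solovay-null set relative to $\alpha$, contradicting its relative randomness. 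Otherwise the widths tend to zero, and an $\alpha_1$-effective enumeration of rectangles of $U_n$ (filtered by the computable condition that the left endpoint falls below some $a_k$) produces a sequence of rational upper bounds approaching $\alpha$; combined with the unconditional left-c.e.\ approximation $a_k\nearrow\alpha$, this would make $\alpha$ computable from $\alpha_1$, contradicting the choice of $\alpha_1$. The main obstacle is making this case-analysis fully rigorous: the ``above-$\alpha$'' truncation $U_n\cap(\Omega_1\times(\alpha,1))$ is not directly effectively open (it uses the non-computable $\alpha$), so one must work through effective under-approximations $U_n\cap(\Omega_1\times(a_k,1))$ for $a_k\nearrow\alpha$, and one has to argue that the $\alpha_1$-effective enumeration of candidate $q_n$ can be filtered without knowing which of the enumerated rectangles actually contain $\alpha$ in their $y$-intervals.
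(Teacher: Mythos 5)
Your setup and the easy half of the argument coincide with the paper's: the same measure from Example~\ref{ex:bauwens}, $\alpha_2=\alpha$ a random left-c.e.\ real, $\alpha_1$ uniformly random relative to $\alpha$ (hence not binary rational and not computing $\alpha$, so $P(\cdot\cnd\alpha_1)$ is uniform on $[\alpha,1]$), and the blind test $\bigcup_k[0,a_k+\eps)=[0,\alpha+\eps)$. One preliminary problem: you cannot ``arrange'' a computable increasing sequence $a_n\nearrow\alpha$ with $\alpha-a_n<2^{-n-1}$ when $\alpha$ is Martin-L\"of random, since a left-c.e.\ real whose computable approximation converges at a computable rate is itself computable. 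Fortunately you never need that rate: $\bigcup_k[0,a_k+2^{-n-1})=[0,\alpha+2^{-n-1})$ holds for any increasing $a_k\to\alpha$, so the test part survives once the impossible condition is deleted.

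The genuine gap is the one you half-admit: the $P$-randomness of $(\alpha_1,\alpha)$ is not established, and your sketch does not clearly close. The dichotomy on $q_n-\alpha$ is tied to a non-canonical choice of rectangles, and in the second case the conclusion ``$\alpha$ is computable from $\alpha_1$'' does not follow: filtering the enumerated rectangles of $U_n$ by the condition ``left endpoint below some $a_k$'' still lets through rectangles whose upper endpoint $q$ is \emph{below} $\alpha$, so what you can extract is at best an $\alpha_1$-left-c.e.\ real $M_n$ with $\alpha<M_n\le\alpha+2^{-n/2}$; since you cannot effectively recognize when the monotone approximation of $M_n$ has passed $\alpha$, this yields neither a Turing reduction of $\alpha$ to $\alpha_1$ nor a right-c.e.\ (relative to $\alpha_1$) approximation of $\alpha$. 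The paper closes this step with a different and much shorter observation that you are missing: for a rectangle $I\times J$ with $I$ of width $2^{-n}$, replacing $J$ by $J\cap(a_n,1]$ removes no point whose second coordinate is $\alpha$ and makes the $P$-measure of the rectangle equal to its Lebesgue measure (below level $a_n$ the mass sits on black segments coarser than $I$, while between $a_n$ and $\alpha$ the segments inside $I$ grab exactly the uniform mass of $I$). Hence $P$-tests and uniform tests are interconvertible on the set of points with second coordinate $\alpha$, so $(\alpha_1,\alpha)$ is $P$-random iff it is random for the uniform product measure, which is exactly classical van Lambalgen for your choice of $\alpha_1$. You need this reduction, or an equally effective substitute, to make the hard step go through.
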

\rd{Sorry, wrote $P(.|\alpha_2)$ now it is $P(.|\alpha_1)$. At the formal level, should one not require that conditional measure is defined before it is claimed not to be random relative to it? --- We claim that $(\alpha_1,\alpha_2)$ is a random pair, so $\alpha_1$ is random and conditional measure exists. Still additional precautions do not hurt.}

\begin{proof}
  We use the measure from Example~\ref{ex:bauwens}. The second component $\alpha_2$ of the pair is now a lower semicomputable random real $\alpha$ that is the limit of a computable increasing sequence of binary fractions $a_i$ (Chaitin's $\Omega$-number).  We start with the following observation: for this $\alpha_2$ \emph{the pair $(\alpha_1,\alpha_2)$ is random if and only if this pair is random with respect to the uniform measure}. Indeed, if we have some enumerable set of rectangles that covers $(\alpha_1,\alpha_2)$, we can safely discard parts of the rectangles that are below some $a_i$, since this does not change anything for $(\alpha_1,\alpha_2)$. In this way we may ensure that the $P$-measure of these rectangles equals their uniform measure (for a thin rectangle we need to discard more of it), so \bl{a} $P$-test can be transformed into a uniform test and vice versa if we are interested only in points with second coordinate $\alpha_2=\alpha$.

  Now we can find a random point $(\alpha_1,\alpha_2)$ with second coordinate $\alpha_2=\alpha$ (according to classical van Lambalgen theorem it is enough to take $\alpha_1$ that is random with respect to uniform measure with oracle $\alpha$). Since $\alpha_1$ is random and not binary-rational, the conditional probability $P(\cdot\cnd\alpha_1)$ is uniformly distributed on $[\alpha,1]$. It remains to show that the \emph{lower semicomputable real $\alpha$ is not blind random with respect to the uniform distribution on $[\alpha,1]$}. Indeed, for every rational $\eps>0$ the interval $(0,\alpha+\eps)$ is effectively open, since it can be represented as the union of $(0,a_i+\eps)$, and its measure with respect to the uniform measure on $[\alpha,1]$ is proportional to $\eps$ (so it is small for small~$\eps$).
\end{proof}

\section{The case of computable conditional measure}

Still the van Lambalgen result can be generalized to non-product measure with an additional computability assumption. As before, we consider a computable measure $P$ on $\Omega_1\times\Omega_2$ and its projection $P_1$ on~$\Omega_1$ (the marginal distribution). The following result was proven by Hayato Takahashi~\cite{TakahashiDefinition,TakahashiProduct}:

\begin{theorem}\label{th:difficult-direction}
If a pair $(\alpha_1,\alpha_2)$ is $P$-random, and the conditional distribution $P(\cdot\cnd\alpha_1)$ is computable with oracle $\alpha_1$, then $\alpha_2$ is Martin-L\"of random with oracle $\alpha_1$ with respect to this conditional distribution.
\end{theorem}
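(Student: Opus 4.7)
My plan is to argue by contradiction. Suppose $\alpha_2$ is not Martin-L\"of random relative to $\alpha_1$ with respect to $P(\cdot \cnd \alpha_1)$; I will build a $P$-Martin-L\"of test $(W_n)$ on $\Omega_1 \times \Omega_2$ that covers $(\alpha_1,\alpha_2)$, contradicting the $P$-randomness of the pair. Because $P(\cdot \cnd \alpha_1)$ is computable with oracle $\alpha_1$, the assumed witness of non-randomness is a uniformly $\alpha_1$-effectively open sequence $(V_n)$ in $\Omega_2$ with $P(V_n \cnd \alpha_1) \le 2^{-n}$ and $\alpha_2 \in \bigcap_n V_n$.

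The construction lifts $(V_n)$ to the product space using the computable conditional probabilities $P_{a_1}$ from Section~1 as a proxy for $P(\cdot \cnd \alpha_1)$. Fix an oracle machine $M$ enumerating the $V_n$'s from $\alpha_1$, and for every finite string $a_1$ let $V_n^{a_1}$ be the finite union of basic cylinders that $M$ outputs using $a_1$ as oracle in $|a_1|$ computation steps; this is monotone in $a_1$. Declare $a_1$ \emph{safe at level~$n$} when the exactly computable rational $P_{a_1}(V_n^{a_1})$ is at most $2\cdot 2^{-n}$, and set
$$W_n = \bigcup\bigl\{\,[a_1]\times V_n^{a_1} : a_1 \text{ safe at level } n\,\bigr\}.$$
This set is effectively open, uniformly in $n$.

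For the containment $(\alpha_1,\alpha_2)\in W_n$: since $\alpha_1$ is $P_1$-random, the martingale convergence used in the proof of the first theorem gives $P_{a_1}(A)\to P(A\cnd\alpha_1)$ along prefixes $a_1\sqsubset\alpha_1$ for every fixed basic open $A\subseteq\Omega_2$. Applied to the monotone family $V_n^{a_1}\nearrow V_n^{\alpha_1}$ with $P(V_n^{\alpha_1}\cnd\alpha_1)\le 2^{-n}$, this forces every sufficiently long prefix of $\alpha_1$ to be safe, while $V_n^{a_1}$ already contains a basic neighbourhood of $\alpha_2$; hence $(\alpha_1,\alpha_2)\in W_n$ for every $n$.

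The main obstacle is the measure bound $P(W_n)=O(2^{-n})$. Since the safe strings are not a priori prefix-free, summing the local bounds $P([a_1]\times V_n^{a_1}) = P_1([a_1])\cdot P_{a_1}(V_n^{a_1}) \le 2\cdot 2^{-n}\cdot P_1([a_1])$ directly does not help. Instead, I would disintegrate
$$P(W_n) = \int_{\Omega_1} P(W_n^\beta \cnd \beta)\, dP_1(\beta),$$
where $W_n^\beta = \bigcup\{V_n^{a_1} : a_1\sqsubset\beta,\ a_1\text{ safe}\}$, and show that the integrand is $O(2^{-n})$ for $P_1$-almost every $\beta$. By monotonicity of $V_n^{a_1}$ in $a_1$, for every fixed prefix $\beta[0..j]$ of $\beta$ that is safe and every safe prefix $a_1\sqsupset\beta[0..j]$ of $\beta$ we have $V_n^{\beta[0..j]}\subseteq V_n^{a_1}$, so $P_{a_1}(V_n^{\beta[0..j]})\le 2\cdot 2^{-n}$; the martingale convergence applied to the fixed set $V_n^{\beta[0..j]}$ along $\beta$ then yields $P(V_n^{\beta[0..j]}\cnd\beta)\le 2\cdot 2^{-n}$ for $P_1$-a.e.~$\beta$ with a tail of safe prefixes, and taking the supremum over $j$ controls $P(W_n^\beta\cnd\beta)$. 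The remaining $\beta$'s, those with infinitely many unsafe prefixes, would be handled either by a separate estimate on the $P_1$-measure of this set or by refining the safety condition (for example, requiring every prefix of $a_1$ to also be safe) so that the test is restricted to a subtree of persistently safe strings; this bookkeeping is the technical core of the proof.
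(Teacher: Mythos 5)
There is a genuine gap, and it is precisely the one the paper isolates as the ``second non-working approach.'' Your safety criterion for admitting a rectangle $[a_1]\times V_n^{a_1}$ is based solely on the interval-conditional probability $P_{a_1}(V_n^{a_1})=P([a_1]\times V_n^{a_1})/P_1([a_1])$; the algorithm that computes $P(\cdot\cnd\alpha_1)$ from the oracle $\alpha_1$ is never consulted. You use the computability hypothesis only to extract the test $(V_n)$, but a uniformly $\alpha_1$-effectively open sequence with $P(V_n\cnd\alpha_1)\le 2^{-n}$ covering $\alpha_2$ is exactly a witness that $\alpha_2$ is not \emph{blind} random, and such a witness exists without any computability assumption on the conditional measure. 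Hence, if your construction yielded a valid $P$-test, it would prove that the second component of every $P$-random pair is blind random with respect to $P(\cdot\cnd\alpha_1)$ --- and the counterexample of Section~\ref{sec:bauwens-example} shows this is false. So the argument must fail somewhere, and it fails exactly where you suspect: the measure bound. Since the safe strings are not prefix-free, a longer safe prefix $a_1'\sqsupset a_1$ contributes a larger section $V_n^{a_1'}$ whose density inside the narrower stripe $[a_1']$ is unrelated to the density of $V_n^{a_1}$ inside $[a_1]$; the values $P_{b}(V_n^{a_1})$ can drift upward arbitrarily as $b$ lengthens past $a_1$. Your disintegration repair founders on the set of $\beta$ with only finitely many safe prefixes: for such $\beta$ the last safe prefix contributes a section whose $\beta$-conditional measure is the \emph{limit} of the martingale along $\beta$, not the value $\le 2\cdot 2^{-n}$ observed at admission time, and this set of $\beta$ need not be $P_1$-small (in Example~\ref{ex:bauwens} it contains all the dyadic atoms, which carry total $P_1$-mass $\alpha$ concentrated below the line $y=\alpha$ where the sections $V_n^{a_1}$ live). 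The containment step has a related soft spot: martingale convergence gives $P_{a_1}(A)\to P(A\cnd\alpha_1)$ for each \emph{fixed} clopen $A$, but your safety condition is a diagonal statement about $P_{a_1}(V_n^{a_1})$ with $V_n^{a_1}$ growing along with $a_1$, and the limit of the open-set martingale $a_1\mapsto P_{a_1}(V_n)$ can strictly exceed $P(V_n\cnd\alpha_1)$, so ``every sufficiently long prefix is safe'' does not follow.

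The missing idea, which is the heart of the paper's proof, is to cross-check the interval-conditional size against the tentative conditional probability produced by the oracle machine $\Gamma$ on the finitely many bits fixed by the stripe. A stripe $S$ is admitted only when the actual size $v(U\cnd S)$ \emph{and} the $\Gamma$-approximation interval $[\underline{P}^S,\bbar{P}^S]$ all fit into a common interval of length $\delta$ below the threshold $\eps$. The crucial property is that the $\Gamma$-interval can only \emph{shrink} as the stripe shrinks (a halted oracle computation stays halted), so any two nested admitted stripes have interval-conditional sizes within $2\delta$ of one another; this is what makes the telescoping sum in Lemma~\ref{lem:HayatoFirstStep} work and bounds the total damage from non-prefix-freeness by $2\sum_i\delta_i$. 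Without this anchor --- that is, without using the hypothesis that $P(\cdot\cnd\alpha_1)$ is computable from $\alpha_1$ somewhere inside the trimming procedure --- no bookkeeping on the quantities $P_{a_1}(V_n^{a_1})$ alone can recover the bound, because the statement one would then be proving is refuted by the counterexample.
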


Before proving this theorem, let us make several remarks about its statement:
\begin{itemize}
\item 
Under our assumption Martin-L\"of randomness is well defined (the distribution is computable with oracle $\alpha_1$). 
\item
  We already know that $\alpha_1$ is $P$-random, so we get a randomness criterion for pairs (assuming the conditional distribution is computable given the condition). 
\item
We assume the computability of conditional distribution \emph{only for condition} $\alpha_1$; for other random elements of $\Omega_1$ (used as conditions) the conditional distribution may not be computable.
\end{itemize}

\begin{proof}[Proof of Theorem~\textup{\ref{th:difficult-direction}}]
Let us first recall the proof for the case of a product measure $P_1\times P_2$. Assume that $\alpha_2$ is \emph{not} random. Then there is a set $Z\subset\Omega_2$ of arbitrarily small $P_2$-measure that covers $\alpha_2$ and is \emph{effectively open with oracle} $\alpha_1$. The latter statement means that $Z$ is a section of some effectively open set of pairs $U\subset\Omega_1\times\Omega_2$ obtained by fixing the first coordinate equal to $\alpha_1$. This set $U$ covers $(\alpha_1,\alpha_2)$ by construction. The problem is that only the $\alpha_1$-section of $U$ is guaranteed to be small while other sections may be large, and we need a bound for the total measure of $U$ to show the non-randomness of $(\alpha_1,\alpha_2)$. 

The solution is that we ``trim'' $U$ making all its sections small. Enumerating the rectangles in $U$, we look at the $P_2$-size of all sections. When some section attempts to become too big, we prevent this and stop increasing that section. In this way we miss nothing in the $\alpha_1$-section of $U$ since it was small in the first place. 

This argument works for the case of product distributions. How can we do similar things in the general case of arbitrary computable measures on $\Omega_1\times \Omega_2$?  Again we start with a set $Z$ of small conditional measure containing $\alpha_2$ and represent $Z$ as $\alpha_1$-section of some effectively open $U\subset \Omega_1\times \Omega_2$. But trimming $U$ is now not so easy. To understand the problem better, let us first consider two simple approaches that do not work.

\smallskip

\emph{First non-working approach}. The problem for the general case is that we have no ``etalon'' measure on sections that can be used for trimming. It is natural to use the conditional measure $P(\cdot\cnd\alpha_1)$, and by assumption, there is an algorithm $\Gamma$ that computes it using $\alpha_1$ as oracle. Given some rectangle, we can split this rectangle horizontally (i.e., fix more and more bits of $\alpha_1$) and use $\Gamma$ to compute the conditional measure with more and more precision for all the parts, letting through only the rectangles where the vertical side is guaranteed to have small ($\Gamma$-computed) measure for the values of $\alpha_1$ that belong to the horizontal side.

The problem with this approach is that the conditional measure is computable given $\alpha_1$ \emph{only on the $\alpha_1$-section} but not elsewhere. So the algorithm $\Gamma$ may have no relation to the measure $P$ outside this section. In this case small values produced by $\Gamma$ do not guarantee anything about the measure of the rectangles that are let through, and we are in trouble.

\smallskip

\emph{Second non-working approach}. Instead of computing the conditional probability, we may use the actual conditional probability when the condition is an interval; unlike the limit probability, it is computable. Imagine that we have some rectangle $A\times B$. Then we can compute the conditional probability of $B$ with condition ``the first coordinate is in $A$'', i.e., the ratio $P(A\times B)/P_1(A)$, and let the rectangle through if this ratio is small (we assume that there are no earlier rectangles in the same vertical stripe). This guarantees that the $P$-measure of the rectangle is small; if we have several allowed rectangles with disjoint horizontal footprints, and for each of them this conditional probability is at most~$\eps$, then the $P$-measure of their union is also at most $\eps$, since for each of them the $P$-measure is bounded by $\eps$ times the horizontal size of the rectangle, and the sum of horizontal sizes is at most $1$.

What is the problem with this approach? (There should be a problem, since in this argument we do not use $\Gamma$, and this is unavoidable, as the counterexample above shows.) The problem becomes clear if we consider the case of overlapping rectangles. 

\begin{center}
 \begin{tikzpicture}[scale=0.75]
    
      \draw[fill=gray!50, draw=black] (3,0) rectangle +(2,5);
      \draw (0,0) rectangle +(5,2);

      \node[anchor=north] at (2.5,0) {$A_1$};
      \node[anchor=south] at (4,5) {$A_2$};
      \node[anchor=east] at (0,1) {$B_1$};
      \node[anchor=west] at (5,2.5) {$B_2$};
    
\end{tikzpicture}
\end{center}

For example, imagine that the set we want to trim contains some rectangle $A_1\times B_1$. We compute the conditional probability $P(B_1\cnd A_1)=P(A_1\times B_1)/P_1(A_1)$. (Note that $P_1(A_1)=P(A_1\times\Omega_2$), so this conditional probability is the density of the rectangle in the vertical $A_1$-stripe, measured according to $P$.) We find that this conditional probability is slightly less than the threshold~$\eps$, so we let this rectangle ($A_1\times B_1$) through. Then we discover another rectangle  $A_2\times B_2$ where $A_2$ is a part of $A_1$, but $B_2$ is bigger than $B_1$ (as shown in the picture), and again find that $P(B_2\cnd A_2)$ is slightly less than~$\eps$. But if we let the second rectangle through, the average vertical measure of the resulting union may exceed $\eps$. This could happen, for example, if all the mass in $A_1\times B_1$ is concentrated outside $A_2\times B_1$; then the conditional measure of $B_1$ exceeds $\eps$ outside $A_2$ and is zero inside $A_2$, thus leaving space for additional measure from~$B_2$.

\smallskip
So the second approach also does not work. How can we deal with this problem?

\smallskip
\emph{Main idea}: We combine the two approaches and always check (before adding something) that the actual conditional probability (with the interval as the condition) is close to the tentative conditional probability computed by $\Gamma$. The latter will remain almost the same for smaller intervals (a valid computation remains valid when more information about the oracle is known), so the errors related to the change will be bounded. 

\smallskip
\emph{Details}. First we need to introduce some terminology and notation. We consider \emph{basic} (=clopen) sets in $\Omega_1\times\Omega_2$, i.e., finite unions of products of intervals. Every effectively open set is a union of a computable increasing sequence of basic sets. A basic set is a \emph{rectangle} if it is the product of two clopen sets in $\Omega$ (not necessarily intervals). \bl{By a \emph{vertical stripe} we mean a rectangle} $S=I\times \Omega_2$, where $I$ is some interval in $\Omega_1$ (i.e., $I$ consists of all extensions of some finite string). A basic set $U$ is \emph{stable in the stripe $S=I\times\Omega_2$} if $U\cap S=I\times V$ for some $V\subset\Omega_2$. This means that all the vertical sections of $U$ inside $S$ are the same; we denote these sections by $U|_S$.

The \emph{horizontal size} $h(S)$ of a stripe $S=I\times\Omega$ is defined as $P_1(I)$ (and is equal to the $P$-measure of this stripe). If a set $U$ is stable in the stripe $S$, its \emph{vertical size in $S$} is defined as $P(U\cap S)/P(S)$, i.e., the conditional probability of $U|_S$ with condition $I$. We denote the vertical size by $v(U\cnd S)$. Note that the vertical size can increase or decrease if we replace $S$ by a smaller stripe $S'$ in $S$ (and if it increases, say, for the left half of $S$, then it decreases for the right half); so ``average vertical size'' would be a better name for $v(U\cnd S)$.

\smallskip
We want to trim an effectively open set $U$ that is the union of a computable increasing sequence of basic sets 
$$
U_1\subset U_2\subset U_3\subset\ldots
$$
Let us explain first which parts of $U_1$ will be let through. We divide $\Omega_1$ into two stripes, then divide each stripe into two halves, and so on. We use the algorithm $\Gamma$ to get the approximations for the tentative conditional probabilities for all stripes. Let us agree, for example, that for stripes $S$ of level $n$ (with footprints of length $2^{-n}$) we always make $n$ steps of the $\Gamma$-computation, using $n$ first bits of the oracle $\alpha_1$ (i.e., the bits that are fixed for a stripe $S$) and produce some lower and upper bounds $\underline{P}^S(V)$ and $\bbar{P}^S(V)$ for the tentative conditional probability of all intervals $V\subset\Omega_2$. Note that for a given $V$ the interval $[\underline{P}^S(V),\bbar{P}^S(V)]$ can only decrease as $S$ becomes smaller. We know that these intervals should converge to $P(V\cnd\alpha_1)$ if we decrease the size of intervals $V$ containing $\alpha_1$; for other points the convergence is not guaranteed.

As soon as a stripe becomes small enough to make $U_1$ stable in this stripe, we compute the lower and upper bounds $\underline{v}(U_1\cnd S)$ and $\bbar{v}(U_1\cnd S)$ for the vertical size $v(U_1\cnd S)$ such that the difference between the lower and upper bounds is at most $2^{-n}$ for stripes of level $n$.\footnote{%
Since $P$ is computable, we can compute $v(U_1\cnd S)$ for each $U_1$ and $S$ with arbitrary precision. The only exception is the case when $P(S)=0$; to avoid it, let us agree that we start processing stripe $S$ only after we discover that $P(S)>0$. In this way we lose all stripes with $P(S)=0$ but this does not matter since these stripes do not contain random pairs $(\alpha_1,\alpha_2)$. (Recall that our goal was to prove that $(\alpha_1,\alpha_2)$ is not random contrary to the assumption.)
  } 
Unlike for $\underline{P}$ and $\bbar{P}$, the interval $[\underline{v}(U_1\cnd S),\bbar{v}(U_1\cnd S)]$ does not necessarily decrease as $S$ becomes smaller. Still they converge to the conditional probability of the $\alpha_1$-section of $U_1$ if $S$ are decreasing stripes around $\alpha_1$ (since conditional probability is well defined for every $P_1$-random point in $\Omega_1$).

If for some stripe $S$ all the four numbers $\underline{P}^S(U_1\cnd S)$, $\bbar{P}^S(U_1\cnd S)$, $\underline{v}(U_1\cnd S)$, and $\bbar{v}(U_1\cnd S)$ are close to each other, more precisely, if all four can be covered by some interval of size $\delta_1$ (where $\delta_1$ is a small number, see below), and at the same time the upper bound $\bbar{v}(U_1\cnd S)$ is less than the threshold $\eps$ selected for trimming, we say that $S$ is \emph{$U_1$-good} and let $U_1$ through inside $S$. Note that smaller stripes may be $U_1$-good or not, but this does not matter at this stage, since $U_1$ is already let through inside $S$.

In this way we get a trimmed version $\hat{U}_1\subset U_1$. The set $\hat{U}_1$ may not be a basic set, but it is effectively open. Before going further, let us prove some properties of this construction: 
\begin{enumerate}
  \item Assume that some pair $(\beta_1,\beta_2)$ is covered by $U_1$, the conditional probability $P(\cdot\cnd\beta_1)$ is well defined and is computed by $\Gamma$ with oracle $\beta_1$, and the $\beta_1$-section of $U_1$ has conditional measure (with condition $\beta_1$) less than $\eps$. Then $(\beta_1,\beta_2)$ is covered by~$\hat{U}_1$. 
  \item The $P$-measure of the trimmed set $\hat{U}_1$ is at most $\eps$.
\end{enumerate} 

Proof of 1: Indeed, look at the smaller and smaller stripes that contain $\beta_1$. Starting from some point, $U_1$ is stable in these stripes, and the vertical size and tentative probabilities converge to some number smaller than~$\eps$. So they finally get into $\delta_1$-interval and are all less than~$\eps$. Therefore all small enough stripes containing $\beta_1$ are $U_1$-good, and the $\beta_1$-section of $U_1$ is not trimmed.

Proof of 2: In every $U_1$-good stripe $S$ the vertical size $v(U_1\cnd S)$ is less than $\eps$, so the measure of $U_1$ inside this stripe is at most $\eps h(S)$. \bl{We may consider only maximal $U_1$-good stripes, and the sum of their horizontal sizes is bounded by $1$.}

\medskip
Now we switch to the next set $U_2$ (we should decide which part of it should remain). We start to consider $U_2$ only inside maximal $U_1$-good stripes selected at the first stage.\footnote{Note that $U_1$-good stripe may have empty intersection with $U_1$, so this does not prevent us from adding some stripes that intersect $U_2$ but not $U_1$.} Let $S$ be one of them. We start dividing $S$ into smaller stripes; at some point  they are small enough to make both $U_1$ and $U_2$ stable. Then we start checking if they are both $U_1$-good (according to our definitions) and \emph{$U_2$-good}. The latter means that they satisfy the similar requirement for $U_2$ with smaller error tolerance $\delta_2$ (the four numbers for $U_2$ are in some $\delta_2$-interval and the upper bound for the vertical size of $U_2$ in the stripe is less than $\eps$). If we find a stripe $S'$ inside $S$ that is both $U_1$-good and $U_2$-good, then the set $U_2$ is let through inside $S'$. So finally we have (inside $S$) the set
$$
(S \cap U_1) \cup \bigcup_{S'} (S'\cap U_2)
$$
where the union is taken over stripes $S'\subset S$ that are both $U_1$- and $U_2$-good. Doing this for all maximal $U_1$-good stripes $S$, we get the trimmed version $\hat{U}_2$ of $U_2$. By construction $\hat{U}_1\subset \hat{U}_2\subset U_2$. 

\smallskip
Now the key estimate for the size of $\hat{U}_2$ inside a maximal $U_1$-good stripe $S$:
\begin{lemma}\label{lem:HayatoFirstStep}
The $P$-measure of $\hat{U}_2\cap S$ is bounded by $(\eps+2\delta_1)h(S)$.
\end{lemma}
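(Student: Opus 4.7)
The plan is to expand $\hat{U}_2\cap S$ by inclusion--exclusion, exploiting that $U_1\subset U_2$ and that the sub-stripes $S'$ (those simultaneously $U_1$- and $U_2$-good, taken maximal so they are pairwise disjoint) all satisfy $S'\cap U_1\subset S\cap U_1$. This gives
\[
P(\hat{U}_2\cap S) \;=\; P(S\cap U_1) \;+\; \sum_{S'} P(S'\cap U_2) \;-\; \sum_{S'} P(S'\cap U_1).
\]
The $U_2$-goodness of each $S'$ bounds $\sum_{S'}P(S'\cap U_2)$ by $\eps\sum_{S'}h(S')$. The real work is to make sure that the ``overcounted'' mass $P(S\cap U_1)$ is mostly cancelled by $\sum_{S'}P(S'\cap U_1)$, so that only an error of order $\delta_1\,h(S)$ remains.

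The key estimate is that for every $U_1$-good $S'\subset S$,
\[
\bigl|v(U_1\cnd S) - v(U_1\cnd S')\bigr| \;\le\; 2\delta_1.
\]
Because $U_1$ is stable in $S$, we have $U_1|_{S'}=U_1|_{S}$, so both $S$ and $S'$ compute tentative $\Gamma$-bounds for the very same set, and these bounds shrink monotonically: $[\underline{P}^{S'},\bbar{P}^{S'}]\subset[\underline{P}^S,\bbar{P}^S]$. The $U_1$-goodness assumption at each level pins the actual vertical size together with the tentative bounds into a common $\delta_1$-window; the two windows (at $S$ and at $S'$) must overlap, since each contains the smaller interval $[\underline{P}^{S'},\bbar{P}^{S'}]$, so their union has diameter at most $2\delta_1$ and contains both $v(U_1\cnd S)$ and $v(U_1\cnd S')$.

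Setting $W=\bigsqcup_{S'}S'$ with $h(W)\le h(S)$, and plugging $v(U_1\cnd S')\ge v(U_1\cnd S)-2\delta_1$ and $v(U_2\cnd S')<\eps$ into the identity above, one obtains
\[
P(\hat{U}_2\cap S) \;\le\; v(U_1\cnd S)\bigl(h(S)-h(W)\bigr) + \eps\,h(W) + 2\delta_1\,h(W);
\]
using $v(U_1\cnd S)<\eps$ from the $U_1$-goodness of $S$ then collapses this to $\eps\,h(S)+2\delta_1\,h(W)\le (\eps+2\delta_1)\,h(S)$, as claimed. The main obstacle is precisely the non-monotonicity of the actual vertical size $v(U_1\cnd\cdot)$ under shrinking of the stripe: without the monotone $\Gamma$-intervals serving as a common anchor, $v(U_1\cnd S')$ and $v(U_1\cnd S)$ would be impossible to compare. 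This is exactly where the ``main idea'' of simultaneously checking the computed and the actual conditional probabilities pays off.
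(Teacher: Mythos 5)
Your proof is correct and follows essentially the same route as the paper's: the identity $P(\hat{U}_2\cap S)=P(S\cap U_1)+\sum_{S'}P(S'\cap U_2)-\sum_{S'}P(S'\cap U_1)$ is the paper's decomposition into first- and second-stage points, and your key estimate $|v(U_1\cnd S)-v(U_1\cnd S')|\le 2\delta_1$, obtained by anchoring both actual vertical sizes to the monotonically shrinking tentative $\Gamma$-interval, is exactly the paper's argument for the $2\delta_1$ error term. Nothing further is needed.
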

Adding these inequalities for all maximal $U_1$-good stripes $S$ (they are disjoint), we see that the total measure of $\hat{U}_2$ is bounded by $\eps+2\delta_1$. \bl{(Note that $\hat{U}_2$, as well as $\hat{U}_1$, is contained in the union of maximal $U_1$-good stripes.)}

\begin{proof}[Proof of Lemma~\textup{\ref{lem:HayatoFirstStep}}]
The measure in question can be rewritten as
$$
P(S\cap U_1) + \sum_{S'} P(S'\cap (U_2\setminus U_1))
$$ 
(we separate points added on the first and second stages). This sum can be rewritten as
$$
h(S)v(U_1\cnd S) + \sum_{S'} h(S')[v((U_2\setminus U_1)\cnd S')]
$$
or
$$
h(S)v(U_1\cnd S) + \sum_{S'} h(S')v(U_2\cnd S')-\sum_{S'}h(S')v(U_1\cnd S')
$$
Imagine for the moment that in the last term the condition is $S$, not $S'$. Then we could combine the first and last term and get
$$
\biggl(h(S)-\sum_{S'}h(S')\biggr)v(U_1\cnd S) + \sum_{S'}h(S')v(U_2\cnd S') \eqno (*)
$$
The factors $v(U_1\cnd S)$ and $v(U_2\cnd S')$ are bounded by $\eps$ (for all $S'$ where $U_2$ is let through), and the sum of horizontal sizes is just $h(S)$, so the lemma is proven without $2\delta_1$-term. This term comes because of the replacement we made: the difference between $v(U_1\cnd S)$ and $v(U_1\cnd S')$ is bounded by $2\delta_1$, and the sum of all $h(S')$ is at most $h(S)$. Indeed, the interval between lower and upper approximations $\underline{P}$, $\bbar P$ only decreases, and both sizes $v(\cdot\cnd S)$ and $v(\cdot\cnd S')$ are in $\delta_1$-neighborhood of every point in the smaller interval (that corresponds to $S'$).
\end{proof}

\bl{%
To simplify the accounting in the future, we can rewrite the bound we have proved. The second term in $(*)$ is the size of $U_2$ inside $U_1$-$U_2$-good stripes $S'$, while the first term plus the error term bounded by $2\delta_1 h(S)$ is the bound for the size of $U_1$ inside $U_1$-stripe $S$ minus $U_1$-$U_2$-good stripes.

We can add these bounds for all maximal $U_1$-good stripes. Let $G_1$ be their union, and let $G_2$ be the union of maximal $U_1$-$U_2$-good stripes (so $G_2\subset G_1$). Then $\hat{U}_2$ is empty outside $G_1$, coincides with $U_1$ inside $G_1\setminus G_2$, and coincides with $U_2$ inside $G_2$. The bounds for $\hat{U}_2$ in the last two cases are $\eps P(G_1\setminus G_2)+2\delta_1 h(G_2)$ and $\eps P(G_2)$ respectively.
}%

\smallskip
Another thing we need to check is the following. Assume that (1)~$(\beta_1,\beta_2)$ is covered by $U_2$; (2)~conditional probability $P(\cdot\cnd\beta_1)$ with condition $\beta_1$ is well defined and $\beta_1$-conditional size of the $\beta_1$-section of $U_2$ is computed by $\Gamma$ with oracle $\beta_1$; (3)~this size is less than $\eps$. Then $(\beta_1,\beta_2)$ is covered by the trimmed set.  Indeed, consider smaller and smaller stripes containing $\beta_1$. Starting from some point, both sets $U_1$ and $U_2$ are stable in those stripes,  all the approximations converge (both for $U_1$ and $U_2$), and the limits are less than $\eps$. So at some stage the stripes become both $U_1$- and $U_2$-good, and at this moment $(\beta_1,\beta_2)$ is covered (unless this happened earlier).

\medskip
Now we consider the next set $U_3$. The same construction is used: consider maximal $U_1$-$U_2$-good stripes selected at the second stage. For each of them we look for stripes inside that are both $U_2$-good and \emph{$U_3$-good} (the latter means that the set $U_3$ is stable, all four parameters are $\delta_3$-close, and the upper bound for the vertical size is less than~$\eps$). Then we do the same thing as before, but not for $U_1$-$U_2$-good stripes inside some $U_1$-good one, but for $U_2$-$U_3$-good stripes inside some $U_2$-good one. The same approach is used for $U_4$, $U_5$, etc.

\rd{%
I made one more attempt of rewriting. The inductive statement was a bit confusing, since we did not say clearly where we use the inductive assumption, and also it was not consistent with lemma before. So I introduced the notation for $G_i$ and reformulate the statement of the previous lemma in the form where it can be generalized to all layers with the same proof (the general statement is now called Lemma~\ref{lem:HayatoGeneral}), and then the result is obtained just by taking a sum, so no need for explicit induction.%
}

\bl{%
In this way we get the set $G_3$ that is the union of maximal $U_2$-$U_3$ good stripes. In this set $U_3$ is let through to be included into $\hat{U}_3$. Then we get $G_4$ where $U_4$ is let through to be included into $\hat{U}_4$, etc. The same reasoning as in the proof of Lemma~\ref{lem:HayatoFirstStep} gives us the following bounds:
\begin{lemma}\label{lem:HayatoGeneral}\leavevmode
\begin{itemize}
\item $P(U_{i-1} \cap (G_{i-1}\setminus G_i))\le \eps P(G_{i-1}\setminus G_i)+2\delta_{i-1} P(G_i)$\textup;
\item $P(U_i\cap G_i)\le \eps P(G_i)$.
\end{itemize}
\end{lemma}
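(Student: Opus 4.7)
The plan is to mimic the proof of Lemma~\ref{lem:HayatoFirstStep} essentially verbatim, with the indices shifted up by one level; both inequalities come from the same telescoping calculation, carried out inside a single stripe and then summed. The second bound is immediate: by construction $G_i$ is the disjoint union of maximal $U_{i-1}$-$U_i$-good stripes $S'$, and the $U_i$-half of the goodness condition gives $v(U_i\cnd S')\le\bbar v(U_i\cnd S')\le\eps$, so multiplying by $h(S')=P(S')$ and summing over the disjoint $S'$ immediately yields $P(U_i\cap G_i)\le\eps\,P(G_i)$.

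For the first bound I would fix a single maximal stripe $S\subset G_{i-1}$ (which is $U_{i-2}$-$U_{i-1}$-good when $i\ge 3$, or merely $U_1$-good when $i=2$) and enumerate the maximal $U_{i-1}$-$U_i$-good stripes $\{S'\}$ contained in $S$, so that $\bigsqcup_{S'\subset S}S'=G_i\cap S$. Then
$$
P(U_{i-1}\cap(S\setminus G_i))=h(S)\,v(U_{i-1}\cnd S)-\sum_{S'\subset S}h(S')\,v(U_{i-1}\cnd S').
$$
Since both $S$ and every $S'$ are $U_{i-1}$-good, the same nested-interval argument from Lemma~\ref{lem:HayatoFirstStep} gives $v(U_{i-1}\cnd S)\le\eps$ and $v(U_{i-1}\cnd S')\ge v(U_{i-1}\cnd S)-2\delta_{i-1}$. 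Using the latter as a lower bound and rearranging,
$$
P(U_{i-1}\cap(S\setminus G_i))\le\Bigl(h(S)-\sum_{S'\subset S}h(S')\Bigr)\eps+2\delta_{i-1}\sum_{S'\subset S}h(S')=\eps\,P(S\setminus G_i)+2\delta_{i-1}\,P(G_i\cap S),
$$
and summing over the disjoint maximal $S\subset G_{i-1}$, with $G_i\subset G_{i-1}$ so that $\sum_S P(G_i\cap S)=P(G_i)$, yields the first bound.

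The only subtle point — and the main obstacle to verify cleanly — is that the $2\delta_{i-1}$ proximity estimate requires both $S$ and $S'$ to be $U_{i-1}$-good. This is built into the definitions: being $U_{i-2}$-$U_{i-1}$-good (for $S$) and being $U_{i-1}$-$U_i$-good (for $S'$) each include $U_{i-1}$-goodness, so the approximation intervals $[\underline{P}^S,\bbar P^S]$ and $[\underline{P}^{S'},\bbar P^{S'}]$ controlling $v(U_{i-1}\cnd\cdot)$ are nested and of width at most $\delta_{i-1}$ around the common smaller one, forcing the two chosen sizes to lie within $2\delta_{i-1}$ of each other. Once this observation is in hand, nothing else in the argument differs from Lemma~\ref{lem:HayatoFirstStep}.
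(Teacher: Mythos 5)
Your proof is correct and follows essentially the same route the paper intends: the paper proves this lemma only by appeal to ``the same reasoning as in the proof of Lemma~\ref{lem:HayatoFirstStep}'', and your argument is precisely that reasoning with indices shifted, including the key observation that both $S$ and $S'$ are $U_{i-1}$-good (with tolerance $\delta_{i-1}$) so that the nested $\Gamma$-approximation intervals force $|v(U_{i-1}\cnd S)-v(U_{i-1}\cnd S')|\le 2\delta_{i-1}$. Nothing is missing.
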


}

What have we achieved? We explained how to trim the set  $U_k$ for each $k$ and get $\hat{U}_k\subset U_k$. The union $\hat U=\bigcup_k \bl{\hat{U_k}}$ is the trimmed version of the effectively open set $U$ we started with.  \bl{In other words, $\hat{U}$ coincides with $U_{i-1}$ inside $G_{i-1}\setminus G_i$ and coincides with $U$ in $\cap_i G_i$}. What are the properties of this $\hat U$? 
\begin{itemize}
\item The trimming procedure is effective: the set $\hat U$ is effectively open uniformly in $U$. This is guaranteed by the construction.

\item The $P$-measure of $\hat U$ is small. Indeed, for each $k$ the measure of $\hat{U}_k$ is bounded by $\eps+2\sum_i \delta_i$ \bl{(sum of the first bounds from Lemma~\ref{lem:HayatoGeneral} for $i=2,\ldots,k$ and the second bound for $i=k$). Then we note that computable $\delta_i$ can be chosen in such a way that $\sum\delta_i<\eps$, and we achieve $P(\hat U)\le 3\eps$, since $P(G_i)\le 1$ for all $i$.} 

\item Assume that conditional probability is well defined for some condition $\beta_1$ and is computed by $\Gamma$ with oracle $\beta_1$. Assume also that the $\beta_1$-section of $U$ has conditional measure less than $\eps$ and contains some $\beta_2$. Then $(\beta_1,\beta_2)\in\hat U$. Indeed, $(\beta_1,\beta_2)$ belongs to some $U_i$ and (under the conditions mentioned) belongs to $\hat{U_i}$ as explained above.
\end{itemize}

Then the proof ends in the same way as in the standard van Lambalgen theorem: since $(\alpha_1,\alpha_2)$ is $P$-random, the first coordinate $\alpha_1$ is $P_1$-random, the conditional probability $P(\cdot\cnd\alpha_1)$ is well defined, and our assumption says that it is computed by $\Gamma$ with oracle $\alpha_1$. If $\alpha_2$ is not (Martin-L\"of) random with respect to the conditional probability, we consider a Martin-L\"of test with oracle $\alpha_1$ for measure $P(\cdot\cnd\alpha_1)$ rejecting $\alpha_2$, represent its elements as $\alpha_1$-sections of a sequence of uniformly effectively open sets $U_i\subset\Omega_1\times\Omega_2$, and trim these sets. This gives us a Martin-L\"of test with respect to measure $P$ that rejects $(\alpha_1,\alpha_2)$, which contradicts the assumption.
\end{proof}

\section{Quantitative version for uniformly computable\\ conditional probabilities}

In the previous section we started with a computable probability distribution $P$ on $\Omega_1\times\Omega_2$, and then defined the conditional distributions on $\Omega_2$. However, in many cases the natural order could be different: we first generate a sequence $\omega$ randomly according to some distribution $P_1$ on $\Omega_1$, and then generate $\omega'$ randomly according to some distribution $P^{\omega}$ on $\Omega_2$ that depends on $\omega$. If the dependence of $P^\omega$ on $\omega$ is computable, then we get some computable distribution $P$ on $\Omega_1\times\Omega_2$. It is easy to check that for $P$ the conditional probabilities indeed coincide with $P^\omega$, so we can apply Takahashi's results from the preceding section. But in this special case the argument could be easier, and a stronger quantitative version could be obtained (as shown by Vovk and Vyugin in \cite[Theorem 1, page 261]{VovkBayesian}, though in somehow obscure notation).

To state this quantitative version, we need to use the notion of randomness deficiency. More precisely, we use expectation-bounded randomness deficiency (see~\cite{survey} for the details). In other words, we consider the maximal (up to $O(1)$-factor) lower semicomputable function $t$ on $\Omega_1$ with non-negative real values (including $+\infty$) such that 
  $$
\int_{\Omega_1} t(\omega)\,dP_1(\omega)\le 1.  
  $$
One can prove (see~\cite{survey}) that such a function exists. We denote this maximal  function by $\ttt_{P_1}(\omega)$; the value $\ttt_{P_1}(\omega)$ is finite for $P_1$-random $\omega$ and infinite for non-random ones. Then we switch to the logarithmic scale and define deficiency as $\dd_{P_1}(\omega)=\log \ttt_{P_1}(\omega)$.

In a similar way one can define randomness deficiency for pairs with respect to $P$: it is the logarithm of the maximal lower semicomputable function $t(\omega,\omega')$ on $\Omega_1\times\Omega_2$ such that
  $$ 
\iint_{\Omega_1\times\Omega_2} t(\omega,\omega')\,dP(\omega,\omega')=\int_{\omega} \int_{\omega'} t(\omega,\omega')\,dP^\omega(\omega')\,dP_1(\omega)\le 1.
  $$
We denote this maximal function by $\ttt_P(\omega,\omega')$ and its logarithm by $\dd_P(\omega,\omega')$.  

We need one more variant of randomness deficiency, and it is a bit more complicated.  We want to measure the randomness deficiency of $\omega'$ with respect to the measure $P^\omega$ given some additional information as oracle. This additional information is $\omega$ itself and some integer (its role will be explained later). We can use the general definition of uniform deficiency (as a function of a sequence and a measure, see~\cite{survey}), but let us give an equivalent definition for this special case. A lower semicomputable function $t(\omega',\omega,k)$ of three arguments ($\omega'$ and $\omega$ are sequences, $k$ is an integer) is called a test, if 
$$
   \int_{\omega'} t(\omega',\omega,k) \,dP^{\omega}(\omega')\le 1 
$$
for every $\omega$ and $k$. There exists a maximal test, as usual: we may trim all the lower semicomputable functions making them tests, and then take their sum with coefficients $1/2^n$ (or other converging series). Trimming is easy since $\omega$ is an argument and $P^\omega$ is computable given $\omega$ (uniformly for all $\omega$, according to our assumption). We denote the maximal test by $\ttt_{P^\omega}(\omega'\cnd \omega,k)$ and its logarithm by $\dd_{P^\omega}(\omega'\cnd \omega,k)$. (We use this notation since $\omega$ is a parameter and $k$ is an additional condition.)

Now we can state the Vovk--Vyugin result:

\begin{theorem}\label{th:vv}
$$
\dd_P(\omega,\omega')=\dd_{P_1}(\omega)+\dd_{P^\omega}(\omega'\cnd \omega, \dd_{P_1}(\omega))+O(1).
$$
\end{theorem}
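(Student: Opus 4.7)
My plan is to split the theorem into two matching inequalities whose sum is the claimed equality up to $O(1)$. The ``easy'' direction $\dd_P(\omega,\omega')\ge\dd_{P_1}(\omega)+\dd_{P^\omega}(\omega'\cnd\omega,\dd_{P_1}(\omega))-O(1)$ is proved by building a product test for the joint measure $P$; the ``hard'' direction $\dd_P(\omega,\omega')\le\dd_{P_1}(\omega)+\dd_{P^\omega}(\omega'\cnd\omega,\dd_{P_1}(\omega))+O(1)$ requires extracting a properly rescaled conditional test out of the universal joint test $\ttt_P$. Because the $k$-slot of $\ttt_{P^\omega}$ is integer-valued while $\dd_{P_1}(\omega)$ is real, I adopt the convention $k=\lfloor\dd_{P_1}(\omega)\rfloor$ throughout. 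Monotonicity of $\ttt_{P^\omega}$ in $k$ together with the fact that perturbing the oracle $k$ by an $O(1)$ integer changes the maximal test by only an $O(1)$ multiplicative factor absorb the rounding into the $O(1)$ error of the theorem.

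For the $\ge$ direction, consider
\[
s(\omega,\omega')=\ttt_{P_1}(\omega)\cdot\ttt_{P^\omega}\bigl(\omega'\cnd\omega,\lfloor\dd_{P_1}(\omega)\rfloor\bigr).
\]
This is lower semicomputable because $\lfloor\dd_{P_1}\rfloor$ is a lower semicomputable integer-valued function (its overgraph is r.e.), $\ttt_{P^\omega}(\cdot\cnd\omega,k)$ is jointly lsc and non-decreasing in $k$, and the product of non-negative lsc functions is lsc. By Fubini,
\[
\iint s\,dP=\int\ttt_{P_1}(\omega)\Bigl(\int\ttt_{P^\omega}(\omega'\cnd\omega,\lfloor\dd_{P_1}(\omega)\rfloor)\,dP^\omega(\omega')\Bigr)\,dP_1(\omega)\le\int\ttt_{P_1}\,dP_1\le 1.
\]
By maximality of $\ttt_P$ we get $s\le O(\ttt_P)$, and taking logarithms yields the desired inequality.

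For the $\le$ direction, set $u(\omega)=\int\ttt_P(\omega,\omega')\,dP^\omega(\omega')$. This is lsc in $\omega$ and, by Fubini, $\int u\,dP_1\le 1$, so maximality of $\ttt_{P_1}$ supplies a constant $c$ with $u\le c\cdot\ttt_{P_1}$. Enumerate $\ttt_P=\sup_n\phi_n$ from below by computable functions and set $u_n(\omega)=\int\phi_n(\omega,\cdot)\,dP^\omega$, which is computable in $(n,\omega)$. Define
\[
v(\omega',\omega,k)=\frac{1}{c\cdot 2^k}\sup\!\bigl\{\phi_n(\omega,\omega'):u_n(\omega)<c\cdot 2^k\bigr\}.
\]
The restricting relation is r.e., so $v$ is lsc; for fixed $(\omega,k)$, monotonicity of $\phi_n$ in $n$ makes the admissible $n$'s an initial segment, and monotone convergence gives $\int v(\omega',\omega,k)\,dP^\omega\le 1$. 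At $k=\lfloor\dd_{P_1}(\omega)\rfloor+1$ one has $c\cdot 2^k\ge c\cdot\ttt_{P_1}(\omega)\ge u(\omega)>u_n(\omega)$ for every $n$, so the sup is the full $\ttt_P(\omega,\omega')$ and $v(\omega',\omega,k)\ge\ttt_P(\omega,\omega')/(2c\cdot\ttt_{P_1}(\omega))$. Maximality of $\ttt_{P^\omega}$ then gives $\ttt_P\le O(\ttt_{P_1})\cdot\ttt_{P^\omega}(\omega'\cnd\omega,\dd_{P_1}(\omega))$, and logarithms finish the proof.

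The main obstacle is the $\le$ direction: the challenge is to design a conditional test whose expectation under $P^\omega$ is uniformly bounded in both $\omega$ and $k$ while remaining tight enough to dominate $\ttt_P/\ttt_{P_1}$ at the specific value $k=\dd_{P_1}(\omega)$. The decisive trick is to rescale by the a priori known $c\cdot 2^k$ rather than the unknown $\ttt_{P_1}(\omega)$, and to use the cut-off $u_n(\omega)<c\cdot 2^k$, which is r.e. because $u_n$ is computable; at $k=\dd_{P_1}(\omega)$ this cut-off becomes vacuous, recovering all of $\ttt_P$. The secondary technicalities are (i) checking that substituting an lsc integer-valued function into the monotone $k$-slot of $\ttt_{P^\omega}$ preserves lsc (used in the $\ge$ direction) and (ii) verifying that rounding between the real $\dd_{P_1}(\omega)$ and its floor only costs $O(1)$ in $\dd_{P^\omega}$; the constant $c$ from the maximality argument and these rounding slacks together account for the $O(1)$ in the theorem.
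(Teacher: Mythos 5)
Your $\le$ direction is correct and is essentially the paper's own argument with the trimming spelled out: your cut-off $u_n(\omega)<c\cdot 2^k$ is exactly the ``trimming that makes this function a test,'' and your check that at $k=\lfloor\dd_{P_1}(\omega)\rfloor+1$ the cut-off is vacuous corresponds to the paper's choice of the constant so that no trimming occurs at $d=\dd_{P_1}(\omega)$.

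The $\ge$ direction, however, has a genuine gap, and it sits precisely at the point the paper flags as the difficulty of this direction. You justify lower semicomputability of $s(\omega,\omega')=\ttt_{P_1}(\omega)\cdot\ttt_{P^\omega}(\omega'\cnd\omega,\lfloor\dd_{P_1}(\omega)\rfloor)$ by asserting that $\ttt_{P^\omega}(\omega'\cnd\omega,k)$ is non-decreasing in $k$. That is false: $k$ enters the test as an exact integer condition, and all one can say is that a unit shift $k\mapsto k\pm 1$ changes the maximal test by an $O(1)$ factor (because the shift is computable); over a range of values of $k$ these factors compound exponentially, and nothing prevents the maximal test from oscillating in $k$ (for instance, a legitimate test may assign a huge value on a set of small $P^\omega$-measure only for even $k$). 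Consequently, as better lower bounds for $\dd_{P_1}(\omega)$ are enumerated and the integer $\lfloor\dd_{P_1}(\omega)\rfloor$ jumps upward, the candidate value $\ttt_{P^\omega}(\omega'\cnd\omega,k)$ may \emph{decrease}, so $s$ is not an increasing limit of computable approximations and need not be lower semicomputable --- exactly the obstacle the paper points out (``it may not be lower semicomputable, since it uses $\dd_{P_1}(\omega)$ as a condition''). The repair is the paper's replacement $T'(\omega,\omega')=\sum_{k<\dd_{P_1}(\omega)}2^{k}\ttt_{P^\omega}(\omega'\cnd\omega,k)$: the summation range is an effectively open condition in $\omega$, so $T'$ is lower semicomputable; the top term already dominates $s$ up to an $O(1)$ factor; and the geometric weights keep $\iint T'\,dP=O(1)$. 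Your rounding remarks (an $O(1)$ perturbation of $k$ costs $O(1)$) are fine, but they do not supply the global monotonicity your construction needs.
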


In this statement we assume that the value of $\dd_{P_1}(\omega)$ in the condition is rounded to an integer; the exact nature of rounding does not matter since it changes the deficiency only by $O(1)$.

\smallskip
Again, before proving this theorem, let us make some remarks:
\begin{itemize}
\item This result has high precision (up to $O(1)$ additive term); if we were satisfied with logarithmic precision, we could omit $\dd_{P_1} (\omega)$ in the condition. Indeed, the standard argument shows that adding condition $d$ could increase the deficiency at most by $O(\log d)$ and decrease it at most by $O(1)$.

\item It is easy to see that $\dd_{P^\omega}(\omega'\cnd\omega,d)$ is finite if and only if $\omega'$ is random with respect to measure $P^\omega$ with oracle $\omega$ (as we have mentioned, adding the condition $d$ changes the deficiency at most by $O(\log d)$, so [in]finite values remain [in]finite). So we get a qualitative version: $(\omega,\omega')$ is $P$-random if and only if $\omega$ is $P_1$-random and $\omega'$ is  $P^\omega$-random with oracle $\omega$. (This statement generalizes the van Lambalgen theorem and is a special case of Takahashi's result considered in the previous section.)

\item
A special case of this statement, when $P^\omega$ does not depend on $\omega$ and is always equal to some computable measure $P_2$, gives a quantitative version of the van Lambalgen theorem for product measure $P_1\times P_2$.

\item
One can consider a finite version of this theorem. If $x$ is a constructive object, e.g., a string or a pair of strings, and $A$ is a finite set containing $x$, we may define the randomness deficiency of $x$ as an element of $A$ in the following way:
$$
d(x\cnd A) = \log|A|-\KP(x\cnd A),
$$
where $\KP(x\cnd A)$ is the conditional prefix complexity of $x$ given $A$. It is easy to check that $d(x\cnd A)$ is positive (up to $O(1)$-error) and that it can also be defined as a logarithm of maximal lower semicomputable function $t(x,A)$ of two arguments ($x$ is an object, $A$ is a finite set) such that 
$$
   \sum_{x\in A} t(x,A) \le 1
$$ 
for each finite set $A$.  We can also define randomness deficiency with an additional condition as
$$
 d(x\cnd A; y)=\log|A|-\KP(x\cnd A,y).
$$
Then we can state the following equality for the deficiency of \bl{a} pair (with $O(1)$-precision):
$$
d((x,y)\cnd A\times B)=d(x\cnd A; B)+d(y\cnd B; x, A, d(x\cnd A; B)).
$$
It is just the Levin--Gacs formula for the complexity of pairs in disguise. Indeed, this statement can be rewritten (with $O(1)$-precision) as
    \begin{multline*}
\log|A\times B|-\KP(x,y\cnd A,B)=\\
=\log|A|-\KP(x\cnd A,B)+
\log|B|-\KP(y\cnd B,x,A,\log|A|-\KP(x\cnd A,B)).
    \end{multline*}
The logarithms cancel each other and we have
$$
\KP(x,y\cnd A,B)=
\KP(x\cnd A,B)+\KP(y\cnd B,x,A,\log|A|-\KP(x\cnd A,B)).
$$
which is just the Levin--Gacs theorem about prefix complexity of a pair (note that $\log |A|$ in the condition does not matter since $A$ is there anyway). 

The proof below can be also adapted to the finite case.\footnote{It would be interesting to derive the statement of the infinite theorem using the formula for expectation-bounded deficiency in terms of prefix complexity and the formula for the complexity of pairs, but it is not clear how (and if) this can be done.}
\end{itemize}

\begin{proof}[Proof of Theorem~\textup{\ref{th:vv}}]
We need to prove two inequalities. In each case, we construct some test and compare it with the maximal one. 

We start with the $\ge$-direction, proving that $d(\omega,\omega')$ is large \bl{enough}. The function
$$
T(\omega,\omega')=\ttt_{P_1}(\omega)\cdot \ttt_{P^\omega}(\omega'\cnd \omega, \dd_{P_1}(\omega))
$$
(where $\dd_{P_1}(\omega)$ in the argument is rounded) has integral at most $1$ with respect to measure~$P$. Indeed, 
  \begin{multline*}
\int_\omega\int_{\omega'}\ttt_{P_1}(\omega)\cdot \ttt_{P^\omega}(\omega'\cnd \omega, \dd_{P_1}(\omega))\,dP^\omega(\omega')\,dP_1(\omega)=\\=
\int_\omega\ttt_{P_1}(\omega)\int_{\omega'} \ttt_{P^\omega}(\omega'\cnd \omega, \dd_{P_1}(\omega))\,dP^\omega(\omega')\,dP_1(\omega)\le
\int_\omega\ttt_{P_1}(\omega)\,dP_1(\omega)\le 1
  \end{multline*}
(first we use that $\ttt_{P^\omega}$ is a test, and then we use that $\ttt_{P_1}$ is a test).  One would like to say that this test $T$ is bounded by the maximal test $\ttt_P$, but the problem is that the function $T$ is not guaranteed to be a test: it may not be lower semicomputable, since it uses $\dd_{P_1}(\omega)$ as a condition. To avoid this problem, we consider a bigger function
$$
T'(\omega,\omega')=\sum_{k<\dd_{P_1}(\omega)} 2^{k}\ttt_{P^\omega}(\omega'\cnd\omega,k).
$$
It is indeed bigger (up to $O(1)$-factor) since the last term in the sum coincides with $T$ up to $O(1)$-factor. This function is lower semicomputable since the property $k<\dd_{P_1}(\omega)$ is effectively open in the natural sense, and $\ttt_{P^\omega}$ is lower semicomputable. And the integral is bounded not only for $T$ but also for $T'$:
\begin{multline*}
\iint\biggl[\sum_{k<\dd_{P_1}(\omega)} 2^{k}\ttt_{P^\omega}(\omega'\cnd\omega,k)\biggr]\,dP^\omega(\omega')\,dP_1(\omega) =\\
=
\int_\omega \biggl[\sum_{k<\dd_{P_1}(\omega)} 2^k \int_{\omega'}\ttt_{P^\omega}(\omega'\cnd\omega,k)\,dP^\omega(\omega')\biggr]\,dP_1(\omega) \le\int_\omega \biggl[\sum_{k<\dd_{P_1}(\omega)} 2^k\biggr]\,dP_1(\omega)\le\\\le O(1)\cdot \int_\omega 2^{\dd_{P_1}(\omega)}\,dP_1(\omega)=O(1)\cdot\int_\omega \ttt_{P_1}(\omega)\,dP_1(\omega)=O(1).
\end{multline*}
Here we use that the sum of different powers of $2$ coincides with its biggest term up to $O(1)$-factor. So we have found a function $T'$ that is lower semicomputable and is a test, so $T'$ and therefore $T$ are bounded by the maximal $P$-test, which gives the required inequality.

Now we have to prove the reversed ($\le$) inequality. For that we consider the maximal $P$-test $\ttt_P(\omega,\omega')$ and the maximal $P_1$-test $\ttt_{P_1}(\omega)$. The function
$$
\omega\mapsto\int_{\omega'} \ttt_P(\omega,\omega')\,dP^{\omega}(\omega')
$$
is lower semicomputable and its integral with respect to measure $P_1$ is at most $1$, therefore this function is bounded by $O(1)\cdot \ttt_{P_1}(\omega)$. So the ratio 
 $$
t(\omega',\omega)=\ttt_P(\omega,\omega')/\ttt_{P_1}(\omega)
 $$ 
has bounded integral over $\omega'$ (with respect to $P^\omega$) for each $\omega$ (and the bound does not depend on~$\omega$). If $t$ were a test, we could compare $t$ with the maximal test $\ttt_{P^\omega}(\omega'\cnd\omega)$ and get the desired inequality. But again the function $t$ may not be lower semicomputable, since it has lower semicomputable function in the denominator.

This is why we need an additional argument $d$ for the test function. Namely, we consider
 $$
t(\omega'\cnd \omega,d)=[\ttt_P(\omega,\omega')/2^{d+c}],
 $$ 
where the square brackets denote trimming that make this function a test (the integral over $\omega'$ for each $\omega$ and $d$ should be bounded by $1$, and the trimming should not change $t(\omega'\cnd \omega,d)$ if for this $\omega$ the integral was already bounded by $1$). The constant $c$ should be chosen in such a way that for $d=\dd_{P_1}(\omega)$ trimming is not needed; this is possible due to the argument above.

It remains to compare this test with the maximal one and note that for this test the required inequality is true by construction.
\end{proof}

There is another generalization of the van Lambalgen theorem. In the previous results we considered only computable measures. However, one can define \emph{uniform randomness test} as a lower semicomputable function $t(\omega,P)$ of two arguments (where $\omega$ is a sequence, and $P$ is a probability distribution on the Cantor space) such that
$$
\int_{\Omega} t(\omega,P)\,dP(\omega)\le 1
$$
for every $P$. Note that we should first define the notion of semicomputability for functions whose arguments are measures; this can be done in a natural way (even for arguments in an arbitrary constructive metric space, see~\cite{survey} for the details). Also we can generalize this definition by allowing points in constructive metric spaces as additional conditions. After that one could prove that
$$
\dd((\omega_1,\omega_2)\cnd P_1\times P_2)=
\dd(\omega_1\cnd P_1; P_2)+
\dd(\omega_2\cnd P_2; P_1,\omega_1,\dd(\omega_1\cnd P_1;P_2)).
$$
if all these quantities are defined in a natural way. It would be interesting to combine this generalization with Vovk--Vyugin result (where $P_2$ is not fixed, but depends on $\omega_1$). One possibility is to assume that $P_1$ is a computable function of some parameter $p$ (a point in a constructive metric space). For example, we may let $P_1=p$, so $P_1$ itself may be used as such a parameter. Then we assume that $P_2^\omega(\cdot)$ is a computable function of $p$ and $\omega$, so the distribution on pairs also becomes a computable function of $p$. In this case we get the equality
 $$
\dd((\omega_1,\omega_2)\cnd P_1; p)=
\dd(\omega_1\cnd P_1; p)+
\dd(\omega_2\cnd P_2^{\omega_1}; p,\omega_1,\dd(\omega_1\cnd P_1;p))+O(1),
$$
where $O(1)$-constant does not depend on $p$.

\section{Acknowledgements} 

\bl{%
We are grateful to the organizers of the ``Focus Semester on Algorithmic
Randomness'' in June 2015:  Klaus Ambos-Spies, Anja Kamp, Nadine Losert, Wolfgang Merkle, and Martin Monath. We thank the Heidelberg university and Templeton foundation for financial support.  

Alexander Shen thanks Vitaly Arzumanyan, Alexey Chernov, Andrei Romashchenko, Nikolay Vereshchagin, and all members of Kolmogorov seminar group in Moscow and ESCAPE team in Montpellier.
}%

\end{document}